\documentclass[12pt]{article}
\usepackage{amsmath,amssymb,amsthm}
\usepackage[margin=1.2in]{geometry}

\newtheorem{theorem}{Theorem}
\newtheorem{lemma}[theorem]{Lemma}
\newtheorem{proposition}[theorem]{Proposition}
\newtheorem{corollary}[theorem]{Corollary}
\theoremstyle{definition}
\newtheorem{definition}[theorem]{Definition}
\newtheorem{remark}[theorem]{Remark}
\newtheorem{question}[theorem]{Question}

\newcommand{\R}{\mathbb{R}}
\newcommand{\Sph}{S^{n-1}}
\newcommand{\Vol}{\mathbf{Vol}}
\newcommand{\Hull}{\mathbf{Hull}}
\newcommand{\CC}{\mathbf{CC}}

\newcommand{\Cen}{\mathbf{Cen}}
\newcommand{\MV}{\mathbf{MaxVol}}
\newcommand{\LV}{\mathbf{LazyVol}}
\newcommand{\ext}{\mathrm{ext}}
\newcommand{\aff}{\mathrm{aff}}

\title{Arranging convex bodies for maximum intersection volume:\\
the sharp efficiency of centroid alignment}
\author{David Victor Feldman}
\date{July 2026}

\newcommand{\msckeywords}{%
\noindent\footnotesize
\emph{2020 Mathematics Subject Classification.} 52A40, 52A20, 52A38, 68U05,
68V20.\\
\emph{Key words and phrases.} convex body, centroid, intersection volume,
Minkowski--Radon inequality, difference body, Brunn--Minkowski, formal
verification.\par}

\begin{document}

\maketitle

\msckeywords

\begin{abstract}
Given finitely many compact convex bodies in $\R^n$, one seeks translates
maximizing the volume of their common intersection.  A lazy solver merely
translates each body so as to place its centroid at the origin.  We prove
that the lazy strategy always captures strictly more than
$\left(\tfrac{2}{n+1}\right)^n$ of the optimal volume, and that this
constant is sharp: families of cones over tangent disks, indexed by finite
nets on the sphere, approach it.  The infimum is not attained.  For two convex
bodies in the plane the resulting sharp constant $4/9$ closes a gap open
since 1996, when de Berg, Cheong, Devillers, van Kreveld and Teillaud
proved that centroid alignment of two convex polygons captures at least
$9/25$ of the maximum overlap and exhibited examples capturing only $4/9$.
The proof rests on the following identity: for a convex body $K$ with
centroid at the origin, the intersection of all centroid-recentered compact
convex supersets of $K$ equals $\tfrac{1}{n+1}(K-K)$.  We close with a promise-problem variant in which
the lazy strategy captures at least $\left(\tfrac{n}{n+1}\right)^n >
\tfrac1e$ of the optimum, uniformly in the dimension.  All results below
have been checked in Lean~4; the one classical input quoted rather than
proved is the equality case of the Brunn--Minkowski inequality, which
enters only for $n\ge2$.
\end{abstract}

\section{Introduction}

Consider the following optimization problem.

\medskip\noindent
{\bf Given}: $\mathcal{C}=\{C_i\}_{i=1,\ldots,m}$, a finite non-empty
family of compact convex subsets of $\R^n$, each with non-empty interior
($n>1$).

\medskip\noindent
{\bf Goal}: Find vectors $\{v_i\}_{i=1,\ldots,m}$, each $v_i\in\R^n$,
maximizing $\Vol\left(\bigcap_{i=1,\ldots,m} (v_i + C_i)\right)$.

\medskip

Given a compact convex body $B$, write $\Cen(B)$ for its centroid.  A
lazy strategy sets each $v_i=-\Cen(C_i)$, thereby putting the centroid of
each $v_i + C_i$ at the origin.  Call
$L(\mathcal{C})=\bigcap_{i}\bigl(-\Cen(C_i) + C_i\bigr)$ the {\em lazy
intersection}.  Write $\MV(\mathcal{C})$ for the optimal volume and
$\LV(\mathcal{C})$ for $\Vol(L(\mathcal{C}))$.  The {\em lazy
efficiency}, namely the ratio $\LV(\mathcal{C})/\MV(\mathcal{C})$,
measures the efficiency of the lazy strategy on the instance
$\mathcal{C}$.  (Lemma~\ref{lem:attain} below confirms that
$\MV(\mathcal{C})$ is a positive, attained maximum, so the ratio makes
sense.)

How badly can the lazy strategy fail?

\begin{theorem}\label{thm:main}
Fix $n>1$.  For every finite family $\mathcal{C}$ as above,
\[
\LV(\mathcal{C}) \;>\; \left(\frac{2}{n+1}\right)^{n} \MV(\mathcal{C}),
\]
and the constant is sharp: the infimum of the lazy efficiency over all
finite families $\mathcal{C}$ in $\R^n$ equals
$\left(\tfrac{2}{n+1}\right)^n$.  In particular the infimum is not
attained.
\end{theorem}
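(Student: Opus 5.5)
The plan is to reduce the lower bound to the identity announced in the abstract: for a convex body $K$ with $\Cen(K)=0$, the intersection of all compact convex supersets of $K$, each translated so that its centroid is at the origin, equals $\tfrac{1}{n+1}(K-K)$. Let $v_i$ be optimal translates, which exist by Lemma~\ref{lem:attain}. Put $K=\bigcap_i(v_i+C_i)$, so $\Vol(K)=\MV(\mathcal{C})>0$, and translate everything so that $\Cen(K)=0$. Each $v_i+C_i$ is then a compact convex superset of $K$, and $-\Cen(C_i)+C_i$ is its centroid-recentered translate. Hence $L(\mathcal{C})$ contains the intersection of all recentered supersets of $K$, which is $\tfrac{1}{n+1}(K-K)$. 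The Rogers--Shephard-free lower bound $\Vol(K-K)\ge 2^n\Vol(K)$ (Brunn--Minkowski) then gives $\LV\ge (2/(n+1))^n\MV$. For strictness, equality in Brunn--Minkowski for $K$ and $-K$ forces $K$ to be centrally symmetric. I will then show that for symmetric $K$ the bound is far from tight: any superset recentered contains $K$ shifted by at most a controlled amount, and the lazy intersection contains more than $\tfrac{2}{n+1}K$. More simply, I plan to argue directly that if $K=-K$, then $K\cap(K+t)$-type arguments give $L\supseteq \tfrac{1}{n+1}(K-K)=\tfrac{2}{n+1}K$ with equality of volumes impossible, because $L$ is a finite intersection of bodies each strictly containing $\tfrac{2}{n+1}K$ near some boundary point.

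The identity itself is proved by two inclusions. For $\supseteq$: if $C\supseteq K$ is compact convex with centroid $c$, I need $\tfrac{1}{n+1}(K-K)\subseteq C-c$, i.e.\ $c+\tfrac{1}{n+1}(x-y)\in C$ for $x,y\in K$. Testing against a support functional $u$, this becomes $\langle c,u\rangle+\tfrac{1}{n+1}(h_K(u)+h_K(-u))\le h_C(u)$. The key bound is the Minkowski--Radon-type inequality $\langle c,u\rangle\le h_C(u)-\tfrac{1}{n+1}w_C(u)$, with $w_C$ the width, in the refined form that uses that $C$ contains $K$: the centroid lies at distance at least $\tfrac{1}{n+1}$ of the width from each supporting hyperplane. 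I would instead need distance at least $\tfrac{1}{n+1}w_K(u)$ from the upper hyperplane. Since $C\supseteq K$ and $K$ has centroid $0$, one must be careful, and the claim must be derived from a one-dimensional marginal argument. The marginal density of $C$ in direction $u$ is a $1/(n-1)$-concave function on an interval containing the $K$-interval $[-h_K(-u),h_K(u)]$. A Grünbaum-type estimate then bounds the centroid position. For $\subseteq$, I take supersets that are cones over $K$ with apex far out.

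Sharpness follows the abstract's construction. Take $K$ to be a double-cone-type body or simplex where $\Vol(K-K)/\Vol(K)$ is large. For each direction $u$ in a finite net, take a cone $C_u$ over a disk tangent to $K$ whose centroid displacement realizes the extremal $\tfrac{1}{n+1}$ ratio in direction $u$. As the net refines, $\LV/\MV$ tends to $(2/(n+1))^n$, which requires $K$ symmetric (e.g.\ a ball) so that $\Vol(K-K)=2^n\Vol(K)$. I must also verify that the optimal placement remains near $\Vol(K)$, since the cones all contain $K$, so $\MV\ge\Vol(K)$.

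The main obstacle is the superset-centroid inequality in the $\supseteq$ direction. This is where I expect the real work, likely via a truncated-cone comparison in the marginal.
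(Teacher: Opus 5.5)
Your reduction for the lower bound is the paper's: align optimally, recenter the optimal intersection $K$, note that each placed body is a compact convex superset of $K$, conclude $L\supseteq\frac{1}{n+1}(K-K)$, and apply Brunn--Minkowski. The step you call the main obstacle is in fact immediate. Minkowski--Radon applied to $C-c$ gives $h_C(u)-c\cdot u\ge w_C(u)/(n+1)$. Since $C\supseteq K$ forces $w_C\ge w_K$, this is at least $w_K(u)/(n+1)$, which is the support function of $\frac{1}{n+1}(K-K)$. No truncated-cone comparison is needed.

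The reverse inclusion of the core identity plays no role in Theorem~\ref{thm:main}. Your far-apex cones would not give it anyway: they push the recentered body to infinity on both sides of direction $u$. The paper instead uses $\Hull(K\cup D_r)$, with a huge disk $D_r$ in a supporting hyperplane.

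Your sharpness sketch matches the paper's construction, with two corrections. First, drop the simplex / large-$\Vol(K-K)$ idea; it points the wrong way. Second, a cone with apex $-u$ and base tangent at $u$ does not contain the unit ball. The paper bounds $\MV$ from below by the common inscribed ball of radius $\rho/\sqrt{\rho^2+4}$. It bounds $\LV$ from above using $\{x\cdot u\le 2/(n+1)\}$ over a $\delta$-net, then lets $\rho\to\infty$ and $\delta\to0$.

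The genuine gap is strictness, and with it ``not attained.'' Your first plan, that the bound is far from tight for symmetric $K$, is false. Your own shift estimate, run on the unit ball with the tangent cones, gives $\bigcap_u B\bigl(-\tfrac{n-1}{n+1}u,1\bigr)=B\bigl(\mathbf 0,\tfrac{2}{n+1}\bigr)$, which is exactly $\frac{2}{n+1}K$. The sharpness families, whose optimal intersections are essentially balls, show that symmetry alone buys nothing. Strictness must use finiteness of the family.

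Your second plan (``finite intersection of bodies each strictly containing $\frac{2}{n+1}K$ near some boundary point'') gives no mechanism. Finitely many convex bodies containing $J$ can intersect in exactly $J$; take $J$ a polytope. What is missing is the following chain.

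Equality plus $J:=\frac{1}{n+1}(K-K)\subseteq L$ forces $L=J$ and $K=-K$. So every $x\in\partial J$ lies on $\partial(C_i-c_i)$ for some $i$, with an outer normal $\nu$ satisfying $h_{C_i-c_i}(\nu)=h_J(\nu)$. Then
\[
\frac{w_K(\nu)}{n+1}=h_J(\nu)=h_{C_i-c_i}(\nu)\ge\frac{w_{C_i}(\nu)}{n+1}\ge\frac{w_K(\nu)}{n+1}.
\]
This is equality in Minkowski--Radon. By its equality case (Lemma~\ref{lem:mr}), $C_i$ is a cone in direction $\nu$. Equality of widths puts the face $F_K(-\nu)$ inside the apex. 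By symmetry, $F_K(\pm\nu)$ and $F_J(\nu)$ are then single points.

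Since $\partial J$ is infinite and is covered by these singleton faces, infinitely many such $\nu$ occur. Some single $C_i$ is then a cone in infinitely many directions, which Lemma~\ref{lem:conedirections} forbids. Your proposal contains none of these three ingredients: the equality case of Minkowski--Radon, the singleton-face observation, and finiteness of cone directions. They are the entire content of the strict inequality.
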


On the line the lazy strategy is exactly optimal.

\begin{proposition}\label{prop:dimone}
For every finite family $\mathcal{C}$ of compact intervals in $\R$,
$\LV(\mathcal{C})=\MV(\mathcal{C})$: centering each interval at its
midpoint maximizes the length of their intersection.
\end{proposition}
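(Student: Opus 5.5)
Write each interval as $C_i=[a_i,b_i]$ with $a_i\le b_i$. Put $c_i=(a_i+b_i)/2$ for its midpoint, which is its centroid, and $r_i=(b_i-a_i)/2$ for its half-length. The plan is to compute both sides explicitly and compare them.

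\emph{Lazy side.} The lazy strategy translates $C_i$ to $[-r_i,r_i]$. These intervals are nested, since all are symmetric about the origin. Their intersection is therefore $[-r,r]$ with $r=\min_i r_i$, and so $\LV(\mathcal{C})=2\min_i r_i$.

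\emph{Upper bound on every arrangement.} Take any translation vectors $v_i$. The intersection $\bigcap_i (v_i+C_i)$ is contained in each single translate $v_j+C_j$. Lebesgue measure is translation invariant, so its length is at most $\Vol(C_j)=2r_j$ for every $j$. Taking the minimum over $j$ gives $\Vol\bigl(\bigcap_i(v_i+C_i)\bigr)\le 2\min_j r_j$. Hence $\MV(\mathcal{C})\le \LV(\mathcal{C})$.

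\emph{Conclusion.} The reverse inequality $\LV(\mathcal{C})\le\MV(\mathcal{C})$ holds trivially, because the lazy choice $v_i=-\Cen(C_i)$ is one admissible arrangement. Together the two bounds give equality, and they show that centering at midpoints attains the maximum. The only point needing care is degenerate data. The paper's standing hypothesis of non-empty interior gives $r_i>0$, but even without it the same inequalities go through. No step presents a real obstacle; the whole argument rests on the nesting of symmetric intervals.
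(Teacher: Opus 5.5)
Your proof is correct and follows the paper's argument: recentering yields nested symmetric intervals whose intersection has length $\min_i \ell_i$, and in any arrangement the intersection lies inside each single translate, so its length is at most that minimum. Your explicit remark that the lazy arrangement is itself admissible, giving $\LV\le\MV$, only spells out a step the paper leaves implicit.
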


\begin{proof}
The intersection of intervals $[a_i,b_i]$ has length
$\min_i b_i-\max_i a_i$ when positive.  Recentering replaces $[a_i,b_i]$ by
$[-\ell_i/2,\ell_i/2]$ with $\ell_i=b_i-a_i$, giving intersection length
$\min_i\ell_i$.  For any translates $v_i+[a_i,b_i]$ the intersection has
length at most $\min_i\ell_i$, since it is contained in each translate.
\end{proof}

The case $m=2$ in the plane has a history in computational geometry.
De Berg, Devillers, van Kreveld, Schwarzkopf and Teillaud \cite{deberg},
in the paper that gave the standard $O((n+m)\log(n+m))$-time algorithm for
maximizing the overlap of two convex polygons under translation, proved
that aligning the two centroids always captures at least $9/25$ of the
maximum overlap, and exhibited a family in which the captured fraction
tends to $4/9$.  They conjectured that $9/25$ can be replaced by $4/9$,
which their family shows would be tight, and observed that any such
improvement must use more than the concavity of $\sqrt{\omega}$ on which
their argument rests: they exhibit a function with that concavity whose
centroid value is exactly $9/25$ of its maximum.  The constant does not
appear in the subsequent literature
\cite{ahnbrassshin,ahnrigid,ahncr,harpeledroy,chanhair}, which pursued
algorithms.  Theorem~\ref{thm:main} applies in particular to $m=2$, and
$\left(\tfrac{2}{n+1}\right)^n = 4/9$ at $n=2$:

\begin{corollary}\label{cor:twobody}
For any two compact convex bodies in the plane, translating each so that
its centroid sits at the origin yields an overlap strictly greater than
$4/9$ of the maximum overlap over all translations; and $4/9$ is the
largest constant for which this holds.  This proves the conjecture of
\cite{deberg} and shows that their family is extremal: the value $4/9$ is
approached but never attained.
\end{corollary}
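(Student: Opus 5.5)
This is the case $n=2$, $m=2$ of Theorem~\ref{thm:main}, so the plan is to deduce it from that theorem together with a check that the extremal families survive the restriction to two bodies. The lower bound is immediate. Specializing Theorem~\ref{thm:main} to $n=2$ gives $\LV(\mathcal{C})>(2/3)^2\MV(\mathcal{C})=\tfrac49\MV(\mathcal{C})$ for every finite family, in particular for $\mathcal{C}=\{C_1,C_2\}$. The inequality is strict, so no pair attains $4/9$.

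Sharpness is where something must be checked, because the infimum in Theorem~\ref{thm:main} is taken over all finite families. The sharpness families described there (cones over tangent disks indexed by nets on the sphere) typically involve many bodies, so they do not directly give two-body examples. I would argue sharpness for pairs directly, using the family of \cite{deberg}. Their construction consists of two convex polygons whose centroid-aligned overlap fraction tends to $4/9$. Since we are told those examples exist, any constant $c>4/9$ fails for some pair, so $4/9$ is the largest admissible constant.

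For a self-contained check, I would reconstruct that example concretely. Take $C_1=K$, a triangle, and $C_2=-K$, or more generally $C_2=\lambda K$ suitably placed, chosen so that the optimal translate of $C_2$ is nearly contained in $C_1$ while centroid alignment loses a fixed factor. The identity from the abstract shows what the extremal situation must look like. The recentered supersets of $K$ intersect in $\tfrac13(K-K)$, which suggests taking $C_1$ a thin triangle-like body and $C_2$ a large body containing a translate of $C_1$ whose centroid is far from that of $C_1$. A degenerate example is a small triangle $K$ together with a large triangle whose centroid sits at a vertex-adjacent position. Computing the ratio as a limit, I expect it to come out as $(2/3)^2$, because homothety by $2/3$ about a vertex relates a triangle to its centroid.

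The main obstacle is verifying that a two-body family really attains $4/9$ in the limit. I would rely on the explicit computation in \cite{deberg} rather than redo it. I would also check that their polygons are genuine convex bodies with nonempty interior, so that Theorem~\ref{thm:main} applies; then the strict inequality rules out attainment.
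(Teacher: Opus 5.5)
Your proposal follows the paper's route: the strict bound $\LV>\tfrac49\MV$ (hence non-attainment) is Theorem~\ref{thm:main} at $n=2$, and because the family of Proposition~\ref{prop:construction} uses many bodies, sharpness for pairs is taken from the two-polygon family of \cite{deberg}, which the paper likewise cites to get $c_{2,2}\le 4/9$. Your tentative reconstruction of that family is not needed and points the wrong way, since for a triangle $K$ paired with any $\lambda K$, $\lambda\neq 0$ (such as $-K$), centroid alignment is already optimal by the affine symmetry of the triangle; a pair that does approach $4/9$ is two thin isosceles triangles of length $1$ and base $2\varepsilon$, one along each coordinate axis, whose optimal placement crosses them at their bases (overlap $\approx 4\varepsilon^2$), while centroid alignment crosses each two-thirds of the way from its apex, where its width is $\tfrac23$ of the base (overlap $\approx \tfrac{16}{9}\varepsilon^2$).
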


Writing $c_{n,m}$ for the infimum of the lazy efficiency over families of
exactly $m$ bodies in $\R^n$, we have $c_{n,m}$ non-increasing in $m$
with limit $\left(\tfrac{2}{n+1}\right)^n$
(Proposition~\ref{prop:construction} uses large families), while the
family of \cite{deberg} shows $c_{2,2}\le 4/9$; hence $c_{2,m} = 4/9$
for every $m\ge 2$.  Whether $c_{n,2}=\left(\tfrac{2}{n+1}\right)^n$ for
$n\ge 3$ we leave open (Question~\ref{q:twobody}).

The lower bound in Theorem~\ref{thm:main} rests on the following identity.
For a convex body $K$, call
\[
\CC(K) \;:=\; \bigcap_{B \supseteq K} \bigl(w_B + B\bigr),
\qquad w_B := \Cen(K) - \Cen(B),
\]
the {\em central core} of $K$, the intersection running over all compact
convex supersets $B$ of $K$; thus each superset is translated so that its
centroid coincides with that of $K$.  (Taking $B=K$ shows
$\CC(K)\subseteq K$.)

\begin{theorem}[Central core identity]\label{thm:core}
Let $K\subset\R^n$ be a compact convex body with non-empty interior and
centroid $c$.  Then
\[
\CC(K) \;=\; c + \frac{1}{n+1}\,(K - K).
\]
\end{theorem}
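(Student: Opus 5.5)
Translating everything by $-c$, I may assume $c=0$. Since $w_B=-\Cen(B)$, a point $x$ lies in $w_B+B$ if and only if $x+\Cen(B)\in B$. Hence
\[
\CC(K)=\{x:\ x+\Cen(B)\in B\ \text{for every compact convex } B\supseteq K\}.
\]
I would compare both sides through support functions. Write $h_B$ for the support function of $B$ and $\omega_B(u)=h_B(u)+h_B(-u)$ for its width in direction $u\in\Sph$. Then $h_{K-K}=\omega_K$, and the claim becomes a statement about how close the centroid of a superset can come to one of its supporting hyperplanes.

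\emph{Inclusion $\supseteq$.} The input is the Minkowski--Radon inequality: for any convex body $B$ and unit vector $u$,
\[
h_B(u)-\langle \Cen(B),u\rangle \;\ge\; \frac{1}{n+1}\,\omega_B(u).
\]
I would prove it by projecting onto the $u$-axis. Let $f(t)$ be the $(n-1)$-volume of the section $B\cap\{\langle y,u\rangle=t\}$. By Brunn--Minkowski, $f^{1/(n-1)}$ is concave on the interval $[-h_B(-u),h_B(u)]$. Among densities of this form, the one whose mean lies closest to the upper endpoint is the cone density $f(t)\propto(t-a)^{n-1}$. This is a one-variable rearrangement argument comparing $f$ with the cone density having the same mean, and that cone density puts the mean exactly $\omega_B(u)/(n+1)$ below the upper endpoint.

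Now take $B\supseteq K$. Then $\omega_B(u)\ge\omega_K(u)$, so
\[
h_B(u)\ \ge\ \langle\Cen(B),u\rangle+\frac{\omega_K(u)}{n+1}\ =\ h_{\Cen(B)+\frac1{n+1}(K-K)}(u)
\]
for all $u$. Therefore $\Cen(B)+\frac{1}{n+1}(K-K)\subseteq B$ for every superset $B$, which gives $\frac1{n+1}(K-K)\subseteq\CC(K)$.

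\emph{Inclusion $\subseteq$.} Take $x\notin\frac1{n+1}(K-K)$. Since $K-K$ is compact and convex, there is a unit vector $u$ and some $\delta>0$ with
\[
\langle x,u\rangle\ \ge\ \frac{\omega_K(u)}{n+1}+\delta .
\]
I will build supersets $B_R\supseteq K$ that nearly attain equality in Minkowski--Radon in direction $u$ and have the same width as $K$ there. Let $D_R$ be the $(n-1)$-dimensional ball of radius $R$ lying in the hyperplane $\{\langle y,u\rangle=h_K(u)\}$, centred at a point of $K$ on that hyperplane. Put $B_R=\Hull(K\cup D_R)$. Then $h_{B_R}(\pm u)=h_K(\pm u)$, so $\omega_{B_R}(u)=\omega_K(u)$.

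The claim to prove is
\[
h_{B_R}(u)-\langle\Cen(B_R),u\rangle\ \longrightarrow\ \frac{\omega_K(u)}{n+1}\qquad (R\to\infty).
\]
To see this, pick $p\in K$ with $\langle p,u\rangle=-h_K(-u)$. Compare $B_R$ with the cone $\Hull(\{p\}\cup D_R)$: it is contained in $B_R$ and has volume $\tfrac{1}{n}\kappa_{n-1}R^{n-1}\omega_K(u)$, where $\kappa_{n-1}$ is the volume of the unit $(n-1)$-ball. The difference $B_R\setminus\Hull(\{p\}\cup D_R)$ lies within bounded distance of the lateral boundary of the cone. It therefore has volume $O(R^{n-2})$, and its points have $u$-coordinate in a fixed bounded interval. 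Consequently the $u$-coordinate of $\Cen(B_R)$ converges to that of the cone's centroid, namely $h_K(u)-\omega_K(u)/(n+1)$.

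For $R$ large this gives
\[
\langle x+\Cen(B_R),u\rangle\ >\ h_{B_R}(u)+\delta/2,
\]
so $x+\Cen(B_R)\notin B_R$ and hence $x\notin\CC(K)$.

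The main obstacle is making this centroid asymptotic rigorous. Specifically, I need the $O(R^{n-2})$ volume bound on the difference region, obtained by showing that each horizontal section of $B_R$ lies in an $O(1)$-neighbourhood of the corresponding cone section. A secondary point is the one-dimensional extremality step inside Minkowski--Radon. Only the inequality is needed there, not its equality case.
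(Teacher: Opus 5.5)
Your proof is correct and has the same architecture as the paper's. The inclusion $\supseteq$ is the identical argument: Minkowski--Radon applied to the recentred superset, then $\omega_B\ge\omega_K$. For $\subseteq$ you use the same supersets $\Hull(K\cup D_R)$, with a large $(n-1)$-ball in a supporting hyperplane. Your choices of the upper hyperplane and of a separating direction, instead of the lower hyperplane and an intersection of slabs over all $u$, are mirror images of the paper's steps.

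The real difference is how you obtain the centroid limit. The paper applies the linear map $T_r$ that shrinks the directions orthogonal to $u$ by $1/r$. It notes that $T_rB_r$ converges in the Hausdorff metric to a fixed non-degenerate cone, then invokes linear equivariance and Hausdorff continuity of the centroid. That is short and conceptual, but it rests on the cited continuity fact and is purely qualitative. You compare $B_R$ directly with the inscribed cone $\Hull(\{p\}\cup D_R)$. This is self-contained and quantitative, giving an $O(1/R)$ error in the centroid height.

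One caution: ``within bounded distance of the lateral boundary'' does not by itself give $O(R^{n-2})$. The lateral surface of this flat cone has area of order $R^{n-1}$, so an $O(1)$-neighbourhood of it inside the slab can have volume comparable to the cone itself. The sectionwise claim you single out is what does the work, and it holds. Put $t_0=-h_K(-u)$, $t_1=h_K(u)$, and let $q\in K$ be the centre of $D_R$. A point $(1-\lambda)k+\lambda d$ of $B_R$ ($k\in K$, $d\in D_R$) at height $t$ has $\lambda\le s:=(t-t_0)/(t_1-t_0)$, because $k\cdot u\ge t_0$. Hence its distance from the centre $(1-s)p+sq$ of the cone's section, a ball of radius $sR$, is at most $sR+\mathrm{diam}\,K$. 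Integrating $\kappa_{n-1}\bigl((sR+\mathrm{diam}\,K)^{n-1}-(sR)^{n-1}\bigr)=O(R^{n-2})$ over $t\in[t_0,t_1]$ gives your bound.
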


\begin{corollary}\label{cor:corevolume}
For every compact convex body $K\subset\R^n$ with non-empty interior,
\[
\left(\frac{2}{n+1}\right)^{n}
\;\le\;
\frac{\Vol(\CC(K))}{\Vol(K)}
\;\le\;
\frac{\binom{2n}{n}}{(n+1)^{n}},
\]
with equality on the left exactly for centrally symmetric $K$ and on the
right exactly for simplices.
\end{corollary}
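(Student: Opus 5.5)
By Theorem~\ref{thm:core}, $\Vol(\CC(K)) = (n+1)^{-n}\Vol(K-K)$, since translation by $c$ does not change volume. The corollary therefore reduces to two-sided bounds on the volume of the difference body relative to $\Vol(K)$:
\[
2^n\Vol(K)\;\le\;\Vol(K-K)\;\le\;\binom{2n}{n}\Vol(K).
\]
Dividing through by $(n+1)^n\Vol(K)$ gives exactly the stated inequalities.

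The lower bound is Brunn--Minkowski applied to $K$ and $-K$. It gives
\[
\Vol(K-K)^{1/n}\ge \Vol(K)^{1/n}+\Vol(-K)^{1/n}=2\Vol(K)^{1/n}.
\]
Equality in Brunn--Minkowski for convex bodies with non-empty interior holds exactly when the two bodies are homothetic. Here $-K=\lambda K+t$ with $\lambda=1$, since the volumes agree. So $-K=K+t$, which says $K$ is centrally symmetric about $-t/2$. Conversely, if $K$ is symmetric about $p$, then $K-K=(K-p)-(K-p)=2(K-p)$, which has volume $2^n\Vol(K)$. This is the one place where the quoted equality case of Brunn--Minkowski enters.

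The upper bound is the Rogers--Shephard inequality. I would prove it in the standard way.
\begin{enumerate}
\item Set $f(x)=\Vol(K\cap(K+x))$. Its support is $K-K$ and $f(0)=\Vol(K)$.
\item Brunn--Minkowski, applied to the sections, shows $f^{1/n}$ is concave on $K-K$.
\item Concavity gives $f(x)\ge (1-\|x\|_{K-K})^n\Vol(K)$, where $\|\cdot\|_{K-K}$ is the gauge of the symmetric body $K-K$.
\item Integrating, $\int f = \Vol(K)^2$ by Fubini. A radial integration of the right-hand side gives $\Vol(K)\Vol(K-K)\binom{2n}{n}^{-1}$.
\item Comparing the two sides yields the inequality.
\end{enumerate}

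The main obstacle is the equality case for the upper bound. Equality forces $f^{1/n}$ to be affine along every ray from $0$. The standard argument, due to Rogers and Shephard, derives from this that for every $x$ the sets $K\cap(K+x)$ are homothetic to $K$. It then concludes that $K$ is a simplex, using the characterization of simplices as the bodies all of whose nonempty intersections with their own translates are homothets of themselves. I would cite this characterization, or prove it by checking that every face of $K$ must behave like that of a simplex.

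The converse, that a simplex attains equality, is a direct computation. For a simplex, $K\cap(K+x)$ is a homothet with ratio $1-\|x\|_{K-K}$, so the bound in step~3 is an equality everywhere. The main risk in the plan is the rigidity step in the equality characterization. The inequalities themselves are routine once Theorem~\ref{thm:core} is in hand.
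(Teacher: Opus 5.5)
Your proposal is correct and follows the paper's proof. Both reduce via Theorem~\ref{thm:core} to $\Vol(\CC(K))=(n+1)^{-n}\Vol(K-K)$, then use Brunn--Minkowski with its homothety equality case for the left bound and Rogers--Shephard for the right. The only difference is that you sketch the standard proof of Rogers--Shephard and its equality analysis, whereas the paper simply quotes the inequality and both equality characterisations from \cite{rogersshephard}.
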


\begin{proof}[Proof of Corollary~\ref{cor:corevolume} from
Theorem~\ref{thm:core}]
By the identity, $\Vol(\CC(K)) = (n+1)^{-n}\Vol(K-K)$.  The
Brunn--Minkowski inequality gives $\Vol(K-K)\ge 2^n\Vol(K)$ with equality
precisely when $K$ and $-K$ are homothetic, i.e.\ when $K$ is centrally
symmetric \cite{gardner,schneider}; the Rogers--Shephard inequality
\cite{rogersshephard} gives $\Vol(K-K)\le\binom{2n}{n}\Vol(K)$ with
equality precisely for simplices.  The identity and both inequalities are
formalised; the two equality characterisations are quoted, and their status
in the formal development is recorded in \S\ref{app:conditional}.
\end{proof}

Since $\CC(K)\subseteq K$ and $\CC(K)$ is centrally symmetric, the left
inequality gives, in every dimension, a lower bound of $2^n/(n+1)^n$ for the
Kovner--Besicovitch measure of central symmetry.  This is not the best known
bound.  In the plane the measure is at least $2/3$, with equality only for
triangles (Kovner; independently Besicovitch and F\'ary \cite{faryredei});
and in $\R^n$ an averaging argument of Stein gives $2^{-n}$, which exceeds
$2^n/(n+1)^n$ for $n\ge4$ and agrees with it at $n=3$.  For both results,
and for the classical planar theory, see Gr\"unbaum's survey
\cite[\S6.5]{grunbaumsurvey}.

Section~\ref{sec:core} establishes the centroid inequality of Minkowski and
Radon with its equality case, then proves Theorem~\ref{thm:core}.
Section~\ref{sec:main} proves Theorem~\ref{thm:main}: the construction
(cones over tangent disks), then the lower bound, then strictness, the last
using
Lemma~\ref{lem:conedirections}: a convex body is a cone in only finitely
many directions.  Section~\ref{sec:promise} treats a promise-problem
variant, under which the lazy strategy captures at least
$\left(\tfrac{n}{n+1}\right)^n > \tfrac1e$ of the optimum, uniformly in
$n$.  Section~\ref{sec:questions} collects open questions.

\section{The central core}\label{sec:core}

Throughout, $h_K(u):=\max_{x\in K} x\cdot u$ denotes the support function
of a compact convex $K$, and $w_K(u):=h_K(u)+h_K(-u)$ its width in the
direction of the unit vector $u$.  Note that $u\mapsto w_K(u)$ is the
support function of the difference body $K-K = K + (-K)$, which is
centrally symmetric about the origin.

We shall use repeatedly the centroid inequality of Minkowski ($n\le 3$)
and Radon; see Bonnesen--Fenchel \cite[\S 34]{bonnesenfenchel},
Gr\"unbaum \cite{grunbaumsurvey}, or Schneider \cite{schneider}.  Since
we also need its equality case, we include a proof.

\begin{lemma}[Minkowski--Radon, with equality case]\label{lem:mr}
Let $K\subset\R^n$ be a compact convex body with non-empty interior and
centroid at the origin, and let $u$ be a unit vector.  Then
\[
h_K(u) \;\ge\; \frac{w_K(u)}{n+1}.
\]
Equality holds for a given $u$ if and only if $K$ is a {\em cone in
direction $u$}: that is, $K=\Hull\bigl(F\cup\{a\}\bigr)$ where
$F = K\cap\{x : x\cdot u = h_K(u)\}$ and $a$ is the unique point of $K$
in the opposite supporting hyperplane $\{x : x\cdot u = -h_K(-u)\}$.
\end{lemma}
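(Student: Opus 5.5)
We reduce to a one-dimensional problem for the section-area profile. Fix $u$. Write $a:=h_K(-u)$, $b:=h_K(u)$, so $K$ lies in the slab $-a\le x\cdot u\le b$ and $w:=w_K(u)=a+b>0$. Let $A(t):=\Vol_{n-1}\bigl(K\cap\{x\cdot u=t\}\bigr)$ for $t\in[-a,b]$. Since the centroid is at the origin, $\int_{-a}^b tA(t)\,dt=0$. By Brunn--Minkowski, $\phi:=A^{1/(n-1)}$ is concave on $[-a,b]$. The claim $b\ge w/(n+1)$ is equivalent to $a\le \tfrac{n}{n+1}w$, i.e.\ the centroid is at least $w/(n+1)$ above the bottom face.

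Next we compare $A$ with the profile of a cone having its apex at the top. Let $C(t):=\lambda(b-t)^{n-1}$ be the profile of a cone with apex at height $b$. Choose $\lambda$ so that $C$ and $A$ have the same total mass on $[-a,b]$. Concavity of $\phi$ and linearity of $C^{1/(n-1)}$ imply that $\phi-C^{1/(n-1)}$ is concave. It is also $\ge0$ at $t=b$, since $C(b)=0$. Equal masses force $A-C$ to change sign. The standard single-crossing argument then gives a $t_0$ with $A\le C$ on $[-a,t_0]$ and $A\ge C$ on $[t_0,b]$: the set where a concave function is negative is a union of at most two end intervals, and the end at $b$ is excluded. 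So $A-C$ has zero total mass and is nonpositive below $t_0$, nonnegative above. Multiplying by $t-t_0$ and integrating gives $\int t(A-C)\ge0$, hence $0=\int tA\ge\int tC$. The centroid of the cone profile sits at $-a+w/(n+1)$, one $(n+1)$-th of the height above the base. Therefore $-a+w/(n+1)\le0$, that is $a\ge w/(n+1)$. This is the inequality with the roles of $u$ and $-u$ reversed, so applying it to $-u$ gives $b\ge w/(n+1)$. The $n=1$ case is immediate, and for general $n$ the mass condition and Fubini are routine.

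For the equality case, equality forces $\int(t-t_0)(A-C)\,dt=0$. The integrand is nonnegative and continuous on the interior, so $A=C$ on $(-a,b)$ once the sign roles are fixed appropriately (apex at $-a$, base at $b$). Then $\phi$ is affine and vanishes at $-a$, so every section $K_t$ satisfies equality in Brunn--Minkowski between $K_{-a}$-side sections and the top face $F=K_b$. By the quoted Brunn--Minkowski equality case, consecutive sections are homothetic, and $\Vol_{n-1}(K_t)$ is exactly that of the homothetic combination $\frac{t+a}{w}F+\frac{b-t}{w}\{a'\}$. Here $a'$ is the limit point of the shrinking sections, which is the unique point in the lower supporting hyperplane. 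The convex hull $\Hull(F\cup\{a'\})\subseteq K$ has the same section volumes, so it equals $K$ up to a null set, and hence equals $K$ by compactness and convexity. The converse is a direct computation of the centroid of a cone, which lies at height $w/(n+1)$ from the base.

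The main obstacle is this equality step. One must derive, from $A$ being exactly a cone profile, that $K$ really is $\Hull(F\cup\{a\})$ and not just a body with the same section volumes. This is where the Brunn--Minkowski equality case (homothety of sections) is unavoidable and why it enters only for $n\ge2$. I would first show that the sections $K_t\supseteq\frac{t+a}{w}F+\frac{b-t}{w}\{a\}$ by convexity, and then conclude equality of these convex sets from equality of their volumes. That sidesteps the full homothety statement except to guarantee that the bottom section is a single point.
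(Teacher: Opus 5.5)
Your inequality argument is the paper's, mirrored. You compare with the cone profile whose apex is at the top and get the single crossing from concavity of $\phi-C^{1/(n-1)}$. The paper puts the apex at the bottom and gets the crossing from monotonicity of $g(t)/t$. Both then integrate the sign pattern against $t-t_0$.

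The real divergence is in the identification step. There you invoke the Brunn--Minkowski equality case and call it unavoidable. It is not, and the paper does without it.

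The inclusion in your last paragraph, $(1-s)q+sF\subseteq K_t$ with $s=(t+a)/w$, holds for \emph{every} point $q$ of the bottom face $K_{-a}$. Both sides have $(n-1)$-volume $s^{n-1}\Vol_{n-1}(F)$, so $K_t=(1-s)q+sF$ for each such $q$. Two points $q,q'$ of the bottom face therefore give the same set. Comparing centroids gives $(1-s)q=(1-s)q'$, hence $q=q'$. So the bottom face is a single point and $K=\Hull(F\cup\{q\})$, with no homothety theorem at all. This is what lets the paper state (and formalise) the lemma unconditionally, reserving the Brunn--Minkowski equality case for the strictness argument of Section~\ref{sec:main}.

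Your homothety route is valid given the Brunn--Minkowski equality case. As written, though, it only asserts that the limit point $a'$ of the shrinking sections is the \emph{unique} point of $K$ in the lower supporting hyperplane. Proving that requires showing each $q\in K_{-a}$ is a limit of the points $(1-s)q+sf\in K_t$ while $\mathrm{diam}\,K_t\to0$. That is the very inclusion that makes homothety superfluous.

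One further detail is needed by both routes: the value $\phi(b)=\lambda^{1/(n-1)}w$ at the top face, so that $\Vol_{n-1}(F)$ matches the profile. This does not follow from $A=C$ on $(-a,b)$ together with closed-interval concavity, which gives only $\phi(b)\le\lim_{t\uparrow b}\phi(t)$. You need continuity of the section volume at the face, which the paper invokes explicitly.
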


\begin{proof}
Write $t = x\cdot u$, let $[t_0,t_1]$ be the range of $t$ on $K$ (so
$t_1=h_K(u)$, $t_0=-h_K(-u)$), and let $A(t)$ denote the
$(n-1)$-dimensional volume of the section $K\cap\{x\cdot u = t\}$.  By
Brunn's theorem $g:=A^{1/(n-1)}$ is concave on $(t_0,t_1)$, and in fact on
the closed interval $[t_0,t_1]$, the Brunn--Minkowski inequality applying to
the inclusion $(1-\sigma)S_a+\sigma S_b\subseteq S_{(1-\sigma)a+\sigma b}$
for all $a,b\in[t_0,t_1]$.  Moreover $g$ is continuous on $[t_0,t_1]$; at an
extreme value of $t$ this says that the sections converge to the
corresponding face.  Both facts are used at the endpoints below, and neither
is automatic there: concavity on the closed interval gives
$g(t_0)\le\lim_{t\downarrow t_0}g(t)$, and continuity promotes an
almost-everywhere identity to an identity.  After the
affine substitution carrying $[t_0,t_1]$ to $[0,1]$, the claim
$h_K(u)\ge w_K(u)/(n+1)$ becomes: the centroid of the measure
$g^{n-1}\,dt$ on $[0,1]$ lies no further right than $n/(n+1)$.

Choose $\lambda>0$ with
$\lambda^{n-1}\int_0^1 t^{\,n-1}\,dt=\int_0^1 g^{n-1}\,dt$.  Since $g$ is
concave and $g(0)\ge 0$, the ratio $g(t)/t$ is non-increasing on
$(0,1]$; hence $g^{n-1}-(\lambda t)^{n-1}$ is $\ge 0$ on an initial
interval $[0,\tau]$ and $\le 0$ on $[\tau,1]$.  Consequently
$(t-\tau)\bigl(g^{n-1}-(\lambda t)^{n-1}\bigr)\le 0$ pointwise, and since
the two profiles have equal total mass,
\[
\int_0^1 t\,g^{n-1}\,dt \;\le\; \lambda^{n-1}\!\int_0^1 t^{\,n}\,dt
\;=\;\frac{\lambda^{n-1}}{n+1}
\;=\;\frac{n}{n+1}\int_0^1 g^{n-1}\,dt ,
\]
which is the claimed centroid bound.  Equality forces
$g(t)=\lambda t$ on $[0,1]$.

It remains to identify the bodies with $g$ linear and vanishing at
$t_0$.  Since $K$ has interior, $\lambda>0$ and $\Vol_{n-1}(S_{t_1})>0$.
Fix $t\in(t_0,t_1)$ and put $s=(t-t_0)/(t_1-t_0)$.  For any $q\in S_{t_0}$,
convexity of $K$ gives
\[
  (1-s)q + s\,S_{t_1}\;\subseteq\;S_t ,
\]
and the left-hand side has $(n-1)$-volume $s^{\,n-1}\Vol_{n-1}(S_{t_1})$,
which equals $\Vol_{n-1}(S_t)$ because $g(t)=\lambda(t-t_0)$.  A compact
convex subset of a convex body of the same finite volume is the whole body,
so
\[
  S_t\;=\;(1-s)q + s\,S_{t_1}.
\]
The right-hand side determines $q$: if $q,q'\in S_{t_0}$ then
$(1-s)q+sS_{t_1}=(1-s)q'+sS_{t_1}$, and comparing centroids gives $q=q'$.
Hence $S_{t_0}$ is a single point $a$, and $S_t=(1-s)a+sS_{t_1}$ for every
$t\in(t_0,t_1)$; that is, $K=\Hull\bigl(S_{t_1}\cup\{a\}\bigr)$.
Conversely, for a cone in direction $u$ one computes $g$ linear directly,
and the centroid of
$t\,(\lambda t)^{n-1}dt/\!\int(\lambda t)^{n-1}dt$ sits exactly at
$n/(n+1)$.
\end{proof}

Restated in body form: a convex body with centroid at the origin
satisfies $-K\subseteq nK$, equivalently $\frac{1}{n+1}(K-K)\subseteq K$.

Lemma~\ref{lem:mr} is proved here in full, equality case included.  The
argument of Section~\ref{sec:main} uses in addition the equality case of the
Brunn--Minkowski inequality \cite[Thm.~7.1.1]{schneider}, in its
difference-body form; that is the only classical input of this paper which
the accompanying formal development quotes rather than proves
(Appendix~\ref{app:lean}).

\begin{proof}[Proof of Theorem~\ref{thm:core}]
Translate so that $c=\Cen(K)=\mathbf{0}$.

\smallskip\noindent
{\em The core contains the scaled difference body.}
Let $B\supseteq K$ be a compact convex superset and let
$B':=B-\Cen(B)$ be its recentered translate, so $\Cen(B')=\mathbf 0$.
For every unit $u$, Lemma~\ref{lem:mr} applied to $B'$ gives
\[
h_{B'}(u)\;\ge\;\frac{w_{B}(u)}{n+1}\;\ge\;\frac{w_K(u)}{n+1},
\]
the second inequality because $K\subseteq B$.  Since
$u\mapsto w_K(u)/(n+1)$ is the support function of
$\frac{1}{n+1}(K-K)$, we conclude
$B'\supseteq\frac{1}{n+1}(K-K)$, and hence
$\CC(K)\supseteq\frac{1}{n+1}(K-K)$.

\smallskip\noindent
{\em The core lies inside every slab.}
Fix a unit $u$ and let $[\alpha_1,\alpha_2]$ be the range of $x\cdot u$
on $K$, so $\alpha_2-\alpha_1=w_K(u)$.  For $r>0$ let $D_r$ be the
$(n-1)$-ball of radius $r$ centered at the point $\alpha_1 u$ inside the
supporting hyperplane $\{x\cdot u=\alpha_1\}$, and set
$B_r:=\Hull(K\cup D_r)$.  Apply the linear map $T_r$ that fixes the
$u$-coordinate and scales the orthogonal complement by $1/r$.  Then
$T_rB_r=\Hull(T_rK\cup D_1)$, and as $r\to\infty$ the bodies $T_rK$
converge in Hausdorff distance to the segment
$\{su:\alpha_1\le s\le\alpha_2\}$, so $T_rB_r$ converges to the cone
$\Hull(D_1\cup\{\alpha_2 u\})$ with base $D_1$ in the hyperplane
$\{x\cdot u=\alpha_1\}$ and apex $\alpha_2u$.  Centroids are equivariant
under invertible linear maps and continuous along Hausdorff-convergent
sequences of bodies with non-degenerate limit \cite{schneider}; since
$T_r$ fixes $u$-coordinates,
\[
\Cen(B_r)\cdot u \;=\;\Cen(T_rB_r)\cdot u
\;\longrightarrow\;
\alpha_1+\frac{\alpha_2-\alpha_1}{n+1},
\]
the centroid height of a cone lying at $1/(n+1)$ of its height above
the base.  The recentered body $B_r-\Cen(B_r)$ still has $u$-range of
the form $[\alpha_1-\Cen(B_r)\cdot u,\;\cdot\;]$, so
\[
\CC(K)\;\subseteq\;\Bigl\{x:\ x\cdot u\ \ge\
\alpha_1-\Cen(B_r)\cdot u\Bigr\}
\;\longrightarrow\;
\Bigl\{x:\ x\cdot u\ \ge\ -\tfrac{w_K(u)}{n+1}\Bigr\},
\]
and since $\CC(K)$ is closed, $\CC(K)\subseteq\{x\cdot u\ge
-w_K(u)/(n+1)\}$.  Running the same construction with $D_r$ placed in
the opposite supporting hyperplane $\{x\cdot u=\alpha_2\}$ yields
$\CC(K)\subseteq\{x\cdot u\le w_K(u)/(n+1)\}$.  Intersecting over all
unit $u$,
\[
\CC(K)\;\subseteq\;\bigcap_{u}\Bigl\{x:\ x\cdot u\le
\tfrac{w_K(u)}{n+1}\Bigr\}
\;=\;\frac{1}{n+1}(K-K),
\]
again because $w_K(u)/(n+1)$ is the support function of the right-hand
body.  Combined with the first half, this proves the identity.
\end{proof}

The position of the auxiliary balls' centers within their hyperplanes is
unconstrained: only the $u$-component of the recentering vector enters the
slab constraint.

\section{Proof of the main theorem}\label{sec:main}

\subsection{Preliminaries}

\begin{lemma}\label{lem:attain}
For every finite family $\mathcal{C}$ as in the Introduction,
$\MV(\mathcal{C})$ is positive and the maximum is attained.
\end{lemma}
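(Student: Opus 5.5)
Two things must be shown: that $\MV(\mathcal{C})$ is positive, and that the supremum over translation vectors is attained. The plan is to regard the volume of the intersection as a function
\[
f(v_1,\dots,v_m)\;=\;\Vol\Bigl(\bigcap_i (v_i+C_i)\Bigr)
\]
on $(\R^n)^m$, show it is upper semicontinuous, and then reduce its maximization to a compact set. Positivity is the easy part. The lazy placement $v_i=-\Cen(C_i)$ puts each centroid at the origin, and the centroid of a convex body with non-empty interior lies in its interior. So every $v_i+C_i$ contains a small ball $B(0,\varepsilon_i)$, and the intersection contains $B(0,\min_i\varepsilon_i)$. Hence $\MV(\mathcal{C})\ge\LV(\mathcal{C})>0$. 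If one prefers not to quote the interior-centroid fact, Lemma~\ref{lem:mr} supplies it directly: it gives $h_{C_i'}(u)\ge w_{C_i}(u)/(n+1)>0$ for every unit $u$, where $C_i'$ is the recentered body.

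Next comes the reduction to a compact set. Since $f$ is unchanged when a common vector is added to all $v_i$, we may fix $v_1=0$. If the intersection is nonempty, then each $v_i+C_i$ meets $C_1$, so $v_i\in C_1-C_i$, which is compact. Outside the compact set $\{0\}\times\prod_{i\ge2}(C_1-C_i)$ the function $f$ vanishes. Because the supremum is positive, it therefore equals the supremum over this compact set.

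Upper semicontinuity is the main step. Let $v^{(k)}\to v$ and write $I_k$, $I$ for the corresponding intersections. All these sets lie in a fixed compact region. For any $\delta>0$, once $k$ is large each $v^{(k)}_i+C_i$ lies in the $\delta$-neighborhood of $v_i+C_i$, so $I_k\subseteq\bigcap_i\bigl(v_i+C_i\bigr)_\delta$. As $\delta\downarrow0$, these intersections of closed neighborhoods decrease to $I$, since each $C_i$ is closed. All sets involved have finite measure, so continuity of Lebesgue measure from above gives $\limsup_k f(v^{(k)})\le f(v)$.

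An upper semicontinuous function on a compact set attains its maximum, which completes the proof. I expect the only delicate point to be this semicontinuity step. Full continuity of $f$ can fail in degenerate configurations where the intersection collapses to a lower-dimensional set. Upper semicontinuity sidesteps that issue, so no appeal to continuity of intersections in the Hausdorff metric is needed.
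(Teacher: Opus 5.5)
Your proof is correct and is essentially the paper's: fix $v_1=\mathbf 0$, confine each $v_i$ to the compact set $C_1-C_i$, prove upper semicontinuity, and take the maximum. The only differences are cosmetic: you use centroids rather than arbitrary interior points for positivity, and you use shrinking closed $\delta$-neighborhoods with continuity from above where the paper takes a pointwise $\limsup$ of indicators and applies reverse Fatou. Your closing aside, however, is mistaken, though harmless: $f$ is in fact continuous, since $\bigl|\prod_i a_i-\prod_i b_i\bigr|\le\sum_i|a_i-b_i|$ for $a_i,b_i\in[0,1]$ and translation is continuous in $L^1$, so in degenerate configurations only the intersection \emph{set} can jump, not its volume.
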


\begin{proof}
Positivity: choose interior points $p_i\in C_i$ and $\varepsilon>0$ with
$B(p_i,\varepsilon)\subseteq C_i$; the translations $v_i=-p_i$ produce an
intersection containing $B(\mathbf 0,\varepsilon)$.

Attainment: the intersection volume is unchanged when all $v_i$ are
translated by a common vector, so we may fix $v_1=\mathbf 0$; a non-empty
intersection then requires $v_i\in C_1+(-C_i)$, a compact set, for each
$i$.  On this compact parameter domain the function
$(v_2,\ldots,v_m)\mapsto\Vol\bigl(\bigcap_i(v_i+C_i)\bigr)$ is upper
semicontinuous: if $v^{(j)}\to v$ and a point $x$ lies in
$\bigcap_i(v^{(j)}_i+C_i)$ for infinitely many $j$, then by closedness
$x\in\bigcap_i(v_i+C_i)$, so
$\limsup_j\mathbf 1_{\bigcap_i(v^{(j)}_i+C_i)}\le
\mathbf 1_{\bigcap_i(v_i+C_i)}$ pointwise, and the reverse Fatou lemma
(with a common compact majorant) gives
$\limsup_j\Vol\le\Vol$ at the limit.  An upper semicontinuous function on
a compact set attains its maximum.
\end{proof}

The strictness argument requires one more lemma.  Call a compact convex
body $K\subset\R^n$ a {\em cone in direction $\nu$} ($\nu$ a unit
vector) if $K=\Hull\bigl((K\cap H_\nu)\cup\{a\}\bigr)$ for some point
$a\in K$, where $H_\nu$ denotes the supporting hyperplane of $K$ with
outer normal $\nu$.  Automatically $a\notin H_\nu$, the point $a$ is the
unique point of $K$ in the opposite supporting hyperplane $H_{-\nu}$
(a convex combination $(1-s)f+sa$ with $f\in H_\nu$ minimizes
$x\cdot\nu$ only at $s=1$), $a$ is an extreme point, and by Milman's
theorem every extreme point of $K$ lies in $(K\cap H_\nu)\cup\{a\}
\subseteq H_\nu\cup\{a\}$.  We call $a$ the {\em apex} and $K\cap
H_\nu$ the {\em base}.

\begin{lemma}\label{lem:conedirections}
A compact convex body $K\subset\R^n$ $(n\ge 2)$ with non-empty interior
is a cone in at most finitely many directions.
\end{lemma}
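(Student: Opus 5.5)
The plan is to attach to each cone direction $\nu$ its apex $a_\nu$, an extreme point of $K$, and to show that only finitely many directions can occur. Two ingredients do the work: one constraint on each apex individually, and one on how the apices of different cone directions sit relative to one another.

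\emph{Step 1: each apex sees the rest of $K$ only through one hyperplane.} Fix a cone direction $\nu$ with apex $a=a_\nu$ and base $F_\nu=K\cap H_\nu$. Since $K$ has interior, $F_\nu$ is $(n-1)$-dimensional. By the remarks preceding the statement, every extreme point of $K$ other than $a$ lies in $H_\nu$. So for two distinct cone directions $\nu\ne\mu$, either $a_\nu=a_\mu$, or $a_\mu\in H_\nu$ and $a_\nu\in H_\mu$.

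\emph{Step 2: a given point is the apex for at most one direction.} Suppose $a_\nu=a_\mu=a$ with $\nu\neq\mu$. Then every extreme point of $K$ other than $a$ lies in $H_\nu\cap H_\mu$, so $K=\Hull\bigl((H_\nu\cap H_\mu\cap K)\cup\{a\}\bigr)$. The set $H_\nu\cap H_\mu$ is either an affine subspace of dimension at most $n-2$, or empty if the two hyperplanes are parallel and distinct. In the first case $K$ lies in an affine space of dimension at most $n-1$; in the second $K$ is the single point $a$. Either way $K$ has empty interior, a contradiction. Hence distinct cone directions have distinct apices.

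\emph{Step 3: there are at most $n+1$ apices.} Take distinct cone directions $\nu_1,\dots,\nu_k$ with distinct apices $a_1,\dots,a_k$. By Step 1, $a_j\in H_{\nu_i}$ for all $j\ne i$. Since $H_{\nu_i}$ supports $K$ and $a_i$ is the unique point of $K$ on the opposite supporting hyperplane, $a_i\notin H_{\nu_i}$. So the affine functional $x\mapsto h_K(\nu_i)-x\cdot\nu_i$ vanishes at every $a_j$ with $j\ne i$ but not at $a_i$. Therefore no $a_i$ lies in the affine hull of the others, so the $a_i$ are affinely independent and $k\le n+1$.

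Step 2 is the key point, and I expect it to be the main obstacle: it is where the non-empty interior hypothesis enters. Beyond it, the care needed is only in justifying the extreme-point containment via Milman's theorem, which the paper has already recorded. With Steps 2 and 3 together, $K$ is a cone in at most $n+1$ directions (exactly the simplices attain $n+1$), which proves the lemma.
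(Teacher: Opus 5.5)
Your argument is correct, and it takes a different and sharper route than the paper's. The paper argues by contradiction from infinitely many cone directions. It first excludes antipodal pairs. It then treats two cases: an apex repeated infinitely often (essentially your Step~2), and infinitely many distinct apices. In the second case it picks a finite set of apices affinely spanning $W=\aff\{a_k\}$, observes that infinitely many of the $H_k$ contain $W$, and concludes that every extreme point lies in a flat of dimension at most $n-2$ cut out by those hyperplanes.

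You instead show that $\nu\mapsto a_\nu$ is injective. This needs only two directions, and your parallel-hyperplane subcase absorbs the antipodal case. You then replace the paper's second case by a dual-basis argument: the functionals $h_K(\nu_i)-x\cdot\nu_i$ vanish at every apex but the $i$th, so the apices are affinely independent. The paper's route yields only finiteness, which is all the strictness proof needs. Yours yields the explicit bound $n+1$, which the paper records merely as a suspicion.

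One more line also gives the equality case, which your parenthetical asserts without proof. With $n+1$ cone directions, each $H_{\nu_i}$ contains the $n$ affinely independent points $a_j$, $j\ne i$, and hence equals $\aff\{a_j : j\ne i\}$. These are the facet hyperplanes of the simplex $\Hull\{a_0,\dots,a_n\}$, so their common intersection is empty. By your Step~1, an extreme point other than an apex would lie in every $H_{\nu_i}$, so none exists. Thus $\ext(K)=\{a_0,\dots,a_n\}$ and $K$ is a simplex.

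The only point to tighten is in Step~2. Say explicitly that $H_\nu\ne H_\mu$ for $\nu\ne\mu$, since coincidence would force $\mu=-\nu$ and zero width. That makes your dichotomy between an $(n-2)$-flat and the empty set exhaustive.
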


\begin{proof}
Suppose $K$ is a cone in infinitely many distinct directions $\nu_k$,
with hyperplanes $H_k:=H_{\nu_k}$ and apexes $a_k$.  No two of the
$\nu_k$ are antipodal: if $K$ were a cone in directions $\nu$ and
$-\nu$, its extreme points would lie in
$(H_{\nu}\cup\{a\})\cap(H_{-\nu}\cup\{a'\})\subseteq\{a,a'\}$ (the two
parallel supporting hyperplanes being disjoint), making $K$ a segment.
Hence the $H_k$ are pairwise distinct hyperplanes.

{\em Case 1: some apex value $a$ occurs for infinitely many $k$.}  For
each such $k$, $\ext(K)\setminus\{a\}\subseteq H_k$; intersecting over
two distinct such hyperplanes, $\ext(K)\setminus\{a\}$ lies in an affine
flat $M$ of dimension at most $n-2$, so
$K=\overline{\Hull}(\ext(K))\subseteq\aff(M\cup\{a\})$ has dimension at
most $n-1$, a contradiction.

{\em Case 2: the apexes $a_k$ take infinitely many distinct values.}
Let $W:=\aff\{a_k : k\}$ and choose a finite index set $S$ with
$\{a_k\}_{k\in S}$ spanning $W$.  Let $T$ be the (infinite) set of
indices $k\notin S$ whose apex $a_k$ differs from every $a_j$,
$j\in S$.  For $k\in T$ and $j\in S$, the point $a_j$ is an extreme
point of $K$ other than $a_k$, hence $a_j\in H_k$; as this holds for a
spanning set, $H_k\supseteq W$ for every $k\in T$.  Now set
$M:=\bigcap_{k\in T}H_k$, an intersection of at least two distinct
hyperplanes, so an affine flat of dimension at most $n-2$, and
$M\supseteq W$.  Every extreme point $x$ of $K$ lies in $M$: for each $k\in T$ we have
$x\in H_k$ or $x=a_k$; so either $x\in H_k$ for every $k\in T$, whence
$x\in M$, or $x=a_k$ for some $k\in T$, whence
$x\in W\subseteq M$.  Thus
$\ext(K)\subseteq M$ and $\dim K\le n-2$, a contradiction.
\end{proof}

(We suspect, but do not need, that the true maximum is $n+1$, attained
exactly by simplices.)

\subsection{The construction}

For a unit vector $u\in\Sph$ and $\rho>0$, let
\[
  D(u,\rho)\;:=\;\{x\in\R^n\;:\;x\cdot u=1,\ |x-u|\le\rho\}
\]
be the closed $(n-1)$-dimensional ball of radius $\rho$ centred at $u$ in the
hyperplane tangent to the unit sphere at $u$, and let
\[
  \Gamma(u,\rho)\;:=\;\Hull\bigl(D(u,\rho)\cup\{-u\}\bigr)
\]
be the cone with base $D(u,\rho)$ and apex $-u$.

\begin{lemma}\label{lem:conebody}
$\Gamma(u,\rho)$ is a compact convex body with non-empty interior, and
\[
  \Cen\bigl(\Gamma(u,\rho)\bigr)\;=\;\frac{n-1}{n+1}\,u .
\]
\end{lemma}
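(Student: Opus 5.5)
The plan is to verify the two claims separately: first that $\Gamma(u,\rho)$ is a body with interior, then the centroid formula by slicing along $u$.

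\emph{Body with interior.} As the convex hull of the compact set $D(u,\rho)\cup\{-u\}$ in finite dimensions, $\Gamma(u,\rho)$ is compact and convex. For interior, note that $D(u,\rho)$ contains $n-1$ affinely independent points spanning the hyperplane $\{x\cdot u=1\}$ together with $u$, say $u$ and $u+\rho e_i/2$ for an orthonormal basis $e_1,\dots,e_{n-1}$ of $u^\perp$. Adding $-u$, which lies off that hyperplane, gives $n+1$ affinely independent points of $\Gamma(u,\rho)$. Their simplex has non-empty interior, so $\Gamma(u,\rho)$ does too.

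\emph{Centroid.} I would use the slice description already used in the equality part of Lemma~\ref{lem:mr}. Write $t=x\cdot u\in[-1,1]$ and $s=(t+1)/2$. Then the section at height $t$ is
\[
(1-s)(-u)+s\,D(u,\rho),
\]
an $(n-1)$-ball centred at $(1-s)(-u)+su=tu$ with radius $s\rho$. Two consequences follow.
\begin{itemize}
\item By rotational symmetry about the axis $\R u$, or because every slice is centred on that axis, the centroid lies on $\R u$.
\item The section volume is $A(t)=c\,(t+1)^{n-1}$ for a constant $c>0$, so
\[
\Cen\cdot u=\frac{\int_{-1}^1 t\,(t+1)^{n-1}\,dt}{\int_{-1}^1(t+1)^{n-1}\,dt}.
\]
\end{itemize}
Substituting $r=t+1\in[0,2]$, the numerator is
\[
\frac{2^{n+1}}{n+1}-\frac{2^n}{n}
\]
and the denominator is $2^n/n$. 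The ratio is therefore
\[
\frac{2n}{n+1}-1=\frac{n-1}{n+1}.
\]
This is consistent with the fact quoted in the proof of Theorem~\ref{thm:core}: the centroid of a cone sits $1/(n+1)$ of the height above the base. Here the height is $2$, so the centroid is at $1-\tfrac{2}{n+1}=\tfrac{n-1}{n+1}$.

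There is no real obstacle. The only points needing care are justifying the slice formula, which is the cone identity $S_t=(1-s)a+sS_{t_1}$, and making the symmetry argument for the components orthogonal to $u$ explicit.
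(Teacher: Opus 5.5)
Your proof is correct and is essentially the paper's argument: symmetry about the $u$-axis, slices that are $(n-1)$-balls of volume proportional to $(t+1)^{n-1}$, and the same one-variable centroid integral. The only difference is minor. You get non-empty interior from $n+1$ affinely independent points, whereas the paper uses the inscribed ball $B(\mathbf 0,\rho/\sqrt{\rho^2+4})$ of Lemma~\ref{lem:conebounds}; your version is self-contained and avoids citing a later lemma.
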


\begin{proof}
Compactness and convexity are clear, and $\Gamma(u,\rho)$ has interior because
it contains a ball (Lemma~\ref{lem:conebounds}).  Rotational symmetry about the
$u$-axis places the centroid on that axis.  Writing $t=x\cdot u$, the section of
$\Gamma(u,\rho)$ at height $t\in[-1,1]$ is an $(n-1)$-ball of radius
proportional to $t+1$, so its $(n-1)$-volume is proportional to $(t+1)^{n-1}$.
Hence the centroid sits at height
\[
  \frac{\int_{-1}^{1}t\,(t+1)^{n-1}\,dt}{\int_{-1}^{1}(t+1)^{n-1}\,dt}
  \;=\;-1+\frac{2n}{n+1}\;=\;\frac{n-1}{n+1}. \qedhere
\]
\end{proof}

\begin{lemma}\label{lem:conebounds}
$\Gamma(u,\rho)\subseteq\{x:x\cdot u\le 1\}$, and
$B\bigl(\mathbf 0,\;\rho/\sqrt{\rho^2+4}\bigr)\subseteq\Gamma(u,\rho)$.
\end{lemma}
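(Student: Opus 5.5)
The first assertion is immediate.  The base $D(u,\rho)$ lies in the hyperplane $\{x\cdot u=1\}$ and the apex $-u$ satisfies $(-u)\cdot u=-1\le 1$.  Since the half-space $\{x:x\cdot u\le 1\}$ is convex, it contains the convex hull $\Gamma(u,\rho)$.

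For the inscribed ball I work in the two-dimensional cross-section.  By rotational symmetry about the $u$-axis, $\Gamma(u,\rho)$ is a solid of revolution, so it suffices to compute the distance from $\mathbf 0$ to its boundary in a plane containing $u$.  In such a plane, with coordinates $(t,y)$ where $t=x\cdot u$ and $y$ is the orthogonal coordinate, the cross-section is the triangle with vertices $(-1,0)$, $(1,\rho)$ and $(1,-\rho)$.

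\begin{itemize}
\item The base edge $t=1$ lies at distance $1$ from the origin.
\item The slanted edge through $(-1,0)$ and $(1,\rho)$ lies on the line $\rho t-2y+\rho=0$.  Its distance from the origin is $\rho/\sqrt{\rho^2+4}$.
\item The other slanted edge gives the same distance by symmetry.
\end{itemize}

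Since $\rho/\sqrt{\rho^2+4}<1$, the minimum of these three distances is $\rho/\sqrt{\rho^2+4}$.

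To make this rigorous without appealing to the picture, I would write $\Gamma(u,\rho)$ as an intersection of half-spaces:
\[
\Gamma(u,\rho)=\{x:\ x\cdot u\le 1,\ \ 2|x-(x\cdot u)u|\le \rho\,(x\cdot u+1)\}.
\]
The second condition is the intersection, over all unit vectors $e\perp u$, of the half-spaces $2\,x\cdot e\le \rho(x\cdot u+1)$.  I would justify the identity as follows.

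\begin{itemize}
\item \emph{Inclusion $\subseteq$.} Both conditions are convex, and both hold at $-u$ and at every point of $D(u,\rho)$.
\item \emph{Inclusion $\supseteq$.} Take $x$ with $t=x\cdot u\in[-1,1]$ and put $s=(t+1)/2$.  The second condition says $x\in(1-s)(-u)+sD(u,\rho)$, and that set lies in the hull.  The bound $t\ge -1$ follows from the second condition, since its right-hand side must be nonnegative.
\end{itemize}

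Then for $|x|\le r:=\rho/\sqrt{\rho^2+4}$ and a unit $e\perp u$, I write $x=\alpha u+\beta e+(\text{rest})$.  I must check that $2\beta-\rho\alpha\le\rho$.  By Cauchy--Schwarz,
\[
2\beta-\rho\alpha\le \sqrt{4+\rho^2}\,|x|\le\rho .
\]
The condition $x\cdot u\le 1$ holds because $|x|<1$.

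The only step needing care is the half-space description of the cone.  The rest is routine.
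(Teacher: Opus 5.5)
Your proof is correct and follows essentially the paper's route: the paper also compares the distance $1$ from $\mathbf 0$ to the base with the distance $\rho/\sqrt{\rho^2+4}$ to the supporting hyperplanes through the apex $-u$ and a rim point $u+\rho w$, the latter computed in the $(u,w)$-plane. Your half-space description of $\Gamma(u,\rho)$ and the Cauchy--Schwarz step make rigorous the paper's assertion that these hyperplanes cover the boundary of the cone away from its base.
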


\begin{proof}
Both $D(u,\rho)$ and $\{-u\}$ lie in the halfspace $\{x\cdot u\le1\}$, which is
convex, giving the first inclusion.  For the second, the boundary of the cone
away from its base is covered by the supporting hyperplanes through the apex
$-u$ and a rim point $u+\rho w$ with $w$ a unit vector orthogonal to $u$.  In
the plane spanned by $u$ and $w$, that line joins $(-1,0)$ to $(1,\rho)$ and has
unit normal $(\rho,-2)/\sqrt{\rho^2+4}$, so its distance from the origin is
$\rho/\sqrt{\rho^2+4}$.  The base lies at distance $1$ from the origin, and
$\rho/\sqrt{\rho^2+4}<1$.
\end{proof}

\begin{proposition}\label{prop:construction}
For every $n>1$ and $\varepsilon>0$ there is a finite family $\mathcal{C}$ in
$\R^n$ with lazy efficiency below
$\left(\tfrac{2}{n+1}\right)^{n}+\varepsilon$.
\end{proposition}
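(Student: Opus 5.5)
The plan is to take the family to be the cones $\Gamma(u,\rho)$ of Lemma~\ref{lem:conebody}, one for each $u$ in a finite $\delta$-net $N\subset\Sph$, with $\rho$ large and $\delta$ small. The argument is then a two-sided estimate: a lower bound for $\MV$ from the untranslated cones, and an upper bound for $\LV$ from the halfspaces that contain the recentred cones.

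\emph{Lower bound for $\MV$.} Use the trivial translations $v_u=\mathbf 0$. By Lemma~\ref{lem:conebounds} every $\Gamma(u,\rho)$ contains $B\bigl(\mathbf 0,\rho/\sqrt{\rho^2+4}\bigr)$, so
\[
\MV(\mathcal C)\;\ge\;\Bigl(\tfrac{\rho}{\sqrt{\rho^2+4}}\Bigr)^{n}\omega_n ,
\]
where $\omega_n$ is the volume of the unit ball. This bound holds independently of the net.

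\emph{Upper bound for $\LV$.} By Lemma~\ref{lem:conebody}, the lazy strategy translates $\Gamma(u,\rho)$ by $-\frac{n-1}{n+1}u$. By the first inclusion of Lemma~\ref{lem:conebounds}, the translate lies in the halfspace
\[
\Bigl\{x:\ x\cdot u\le 1-\tfrac{n-1}{n+1}=\tfrac{2}{n+1}\Bigr\}.
\]
Hence $L(\mathcal C)\subseteq P:=\bigcap_{u\in N}\{x\cdot u\le \tfrac{2}{n+1}\}$. To bound $P$, take $x\in P$ with $x\neq \mathbf 0$ and choose $u\in N$ at angle at most $\delta<\pi/2$ from $x/|x|$. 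Then
\[
|x|\cos\delta\;\le\;x\cdot u\;\le\;\tfrac{2}{n+1},
\]
so $P\subseteq B\bigl(\mathbf 0,\tfrac{2}{(n+1)\cos\delta}\bigr)$. Therefore
\[
\LV(\mathcal C)\;\le\;\Bigl(\tfrac{2}{(n+1)\cos\delta}\Bigr)^n\omega_n .
\]

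\emph{Conclusion.} Dividing the two bounds, the lazy efficiency is at most
\[
\Bigl(\tfrac{2}{n+1}\Bigr)^n\Bigl(\tfrac{\sqrt{\rho^2+4}}{\rho\cos\delta}\Bigr)^n .
\]
The second factor tends to $1$ as $\rho\to\infty$ and $\delta\to0$. So, given $\varepsilon$, we first pick $\rho$ and $\delta$ making this expression less than $\left(\frac{2}{n+1}\right)^n+\varepsilon$, and then pick any finite $\delta$-net $N$, which exists by compactness of $\Sph$.

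I expect no step to be a real obstacle. The only points needing care are that the net argument requires $\delta<\pi/2$, and that the lower bound must be taken against the true maximum $\MV$ rather than some particular configuration; this is harmless, since exhibiting one configuration bounds the maximum from below. Lemma~\ref{lem:attain} guarantees that $\MV$ is finite and positive, so the ratio is meaningful.
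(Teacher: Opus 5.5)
Your proof is correct and is the paper's argument: the same family $\{\Gamma(u,\rho)\}_{u\in N}$ over a finite $\delta$-net, the same lower bound on $\MV$ from the untranslated cones containing $B\bigl(\mathbf 0,\rho/\sqrt{\rho^2+4}\bigr)$, and the same halfspace bound $x\cdot u\le\tfrac{2}{n+1}$ on the recentred cones. The only difference is cosmetic: you measure the net by angle and get the factor $\cos\delta$, whereas the paper measures chordal distance and gets $1-\delta^2/2$; these agree under the substitution $\delta_{\mathrm{chord}}=2\sin(\delta_{\mathrm{ang}}/2)$.
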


\begin{proof}
Fix $\delta\in(0,1)$ and $\rho>0$, to be chosen.  The unit sphere is compact, so
it carries a finite $\delta$-net $N$: every unit vector lies within $\delta$ of
some $u\in N$.  Take $\mathcal{C}=\{\Gamma(u,\rho)\}_{u\in N}$, and write
$\kappa$ for the volume of the unit ball and $r=\rho/\sqrt{\rho^2+4}$.

By Lemma~\ref{lem:conebounds} every member contains $B(\mathbf 0,r)$, so the
family {\em as given} has intersection of volume at least $r^{n}\kappa$; a
fortiori $\MV(\mathcal{C})\ge r^{n}\kappa$.

For the lazy intersection, Lemma~\ref{lem:conebody} says recentering translates
$\Gamma(u,\rho)$ by $-\frac{n-1}{n+1}u$, and $\Gamma(u,\rho)$ lies in
$\{x\cdot u\le1\}$, so the recentered body lies in
\[
  \Bigl\{x:x\cdot u\le 1-\tfrac{n-1}{n+1}\Bigr\}
  \;=\;\Bigl\{x:x\cdot u\le\tfrac{2}{n+1}\Bigr\}.
\]
Let $x\ne\mathbf 0$ lie in the lazy intersection and choose $u\in N$ with
$\bigl|\,x/|x|-u\,\bigr|\le\delta$; then $x\cdot u\ge|x|(1-\delta^2/2)$, whence
$|x|\le\frac{2}{(n+1)(1-\delta^2/2)}$.  Therefore
\[
  \frac{\LV(\mathcal{C})}{\MV(\mathcal{C})}
  \;\le\;
  \left(\frac{2}{(n+1)(1-\delta^{2}/2)}\right)^{\!n}
  \left(\frac{\sqrt{\rho^{2}+4}}{\rho}\right)^{\!n},
\]
and the right-hand side tends to $\left(\tfrac{2}{n+1}\right)^{n}$ as
$\delta\to0$ and $\rho\to\infty$.
\end{proof}

\begin{remark}
The full-sphere family $\{\Phi(u,\rho)\}_{u\in\Sph}$ is exactly
optimally aligned as given, with intersection $B(\mathbf 0,1)$: each
$\Phi(u,\rho)$ has width exactly $2$ in the direction $u$, so any
intersection of translates has width at most $2$ in every direction,
hence diameter at most $2$, hence volume at most
$\Vol(B(\mathbf 0,1))$ by the isodiametric inequality
\cite[Theorem 3.2.3]{gardner}.  Its lazy intersection equals
$B(\mathbf 0,1-t(\rho))$ exactly, being sandwiched between
$\bigcap_u B(-t u,1)=B(\mathbf 0,1-t)$ and
$\bigcap_u\{x\cdot u\le 1-t\}=B(\mathbf 0,1-t)$.  For a finite
$\delta$-net the same reasoning survives approximately: a width bound
on a $\delta$-net self-improves to the diameter bound
$\mathrm{diam}\le 2/(1-2\delta)$ for $\delta<\tfrac12$, via
$w(v)\le w(u)+2\,\mathrm{diam}\,\delta$ for the nearest net direction
$u$.  We emphasize the order of the limits in
Proposition~\ref{prop:construction}: for a {\em fixed} finite net,
letting $\rho\to\infty$ ruins the example, since two cones with nearby axes
admit translates overlapping in volume of order $\rho^{\,n-1}$, sending
$\MV$ to infinity.
\end{remark}

\subsection{The lower bound, with strictness}

\begin{proof}[Proof of Theorem~\ref{thm:main}]
Proposition~\ref{prop:construction} bounds the infimum from above, so it
suffices to prove the strict inequality
$\LV>\left(\tfrac{2}{n+1}\right)^n\MV$ for every finite family.

By Lemma~\ref{lem:attain} we may translate the given bodies so that the
identity alignment is optimal:
$I:=\bigcap_i C_i$ satisfies $\Vol(I)=\MV(\mathcal{C})>0$.  Translate
further so that $\Cen(I)=\mathbf 0$, and write $c_i:=\Cen(C_i)$.  Since
each $C_i$ is a compact convex superset of $I$, the definition of the
central core gives
\[
L_0\;:=\;\bigcap_i\,(C_i-c_i)\;\supseteq\;\CC(I)
\;=\;\frac{1}{n+1}(I-I)\;=:\;J,
\]
using Theorem~\ref{thm:core}; and $\Vol(L_0)=\LV(\mathcal{C})$, the lazy
intersection being a translate of $L_0$.  Brunn--Minkowski yields
$\Vol(J)\ge\left(\tfrac{2}{n+1}\right)^n\Vol(I)$, whence the
non-strict inequality.

Suppose, for contradiction, that equality holds:
$\Vol(L_0)=\left(\tfrac{2}{n+1}\right)^n\Vol(I)$.  Then
$\Vol(L_0)\le\Vol(J)$; since $J\subseteq L_0$ are compact convex sets
and $J$ has interior, this forces $L_0=J$ together with equality in
Brunn--Minkowski, so $I=-I$: after our translation, $I$ is centrally
symmetric about the origin, $h_I(u)=w_I(u)/2$ for all $u$, and
$J=\frac{2}{n+1}I$.

Call a unit vector $\nu$ {\em critical} if
$h_{C_i-c_i}(\nu)=h_J(\nu)$ for some index $i$.  We make three
observations.

\smallskip\noindent
{\em (i) Every boundary point of $J$ admits a critical normal.}
Let $x\in\partial J$.  Since $J=L_0=\bigcap_i(C_i-c_i)$, the point $x$
lies on the boundary of some $C_i-c_i$ (were $x$ interior to every
$C_i-c_i$, it would be interior to $J$).  Choose a unit outer normal
$\nu$ of $C_i-c_i$ at $x$; then
\[
h_{C_i-c_i}(\nu)\;=\;x\cdot\nu\;\le\;h_J(\nu)\;\le\;h_{C_i-c_i}(\nu),
\]
the middle inequality because $x\in J$, the last because
$J\subseteq C_i-c_i$.  So $\nu$ is critical, with equality realized at
$x$; in particular $x$ lies in the face
$F_J(\nu):=J\cap\{y\cdot\nu=h_J(\nu)\}$.

\smallskip\noindent
{\em (ii) Criticality forces cone structure and singleton faces.}
Let $\nu$ be critical for the index $i$.  Then
\[
\frac{w_I(\nu)}{n+1}\;=\;h_J(\nu)\;=\;h_{C_i-c_i}(\nu)
\;\ge\;\frac{w_{C_i}(\nu)}{n+1}\;\ge\;\frac{w_I(\nu)}{n+1},
\]
the first equality since $J=\frac{2}{n+1}I$ and $h_I=w_I/2$, the first
inequality by Lemma~\ref{lem:mr} applied to $C_i-c_i$ (whose centroid is
the origin), the last since $I\subseteq C_i$.  Both inequalities are
equalities.  By the equality case of Lemma~\ref{lem:mr}, $C_i$ is a
cone in direction $\nu$, with apex $a$ the unique point of $C_i$ in its
supporting hyperplane with outer normal $-\nu$.  By the width equality
and $I\subseteq C_i$, the interval ranges of $x\cdot\nu$ on $I$ and
$C_i$ coincide; hence
\[
F_I(-\nu)\;=\;I\cap\{x\cdot\nu=\min\nolimits_{C_i}x\cdot\nu\}
\;\subseteq\;C_i\cap\{x\cdot\nu=\min\nolimits_{C_i}x\cdot\nu\}
\;=\;\{a\},
\]
so $F_I(-\nu)$ is a single point; by central symmetry
$F_I(\nu)=-F_I(-\nu)$ is a single point as well, and therefore so is
$F_J(\nu)=\frac{2}{n+1}F_I(\nu)$.

\smallskip\noindent
{\em (iii) Conclusion.}
By (i) and (ii), $\partial J$ is covered by the singleton faces
$F_J(\nu)$, $\nu$ critical.  Since $n\ge 2$, $\partial J$ is infinite; as
each such face is a single point, infinitely many directions are critical.  Each
critical direction makes some $C_i$ a cone in that direction; as there
are finitely many bodies, some single $C_i$ is a cone in infinitely
many directions, contradicting Lemma~\ref{lem:conedirections}.

The equality assumption is untenable, and the strict inequality, hence
Theorem~\ref{thm:main}, is proved.
\end{proof}

\begin{remark}
Strictness settles the attainment question: the ratio $4/9$ is approached
but never attained.  This is consistent with \cite{deberg}, whose planar
family is described by a degenerating parameter and whose overlap ratios
are computed only in the limit.
\end{remark}

\section{A promise under which the lazy strategy does well}
\label{sec:promise}

The lower-bound argument admits systematic variations.  We record one.

The language of (set-theoretic) fiber spaces provides a foundation for
optimization: the poser picks a point in the base space, namely a given,
and the solver searches the fiber over that point for an optimum.
Common modifications add side conditions that shrink the fibers.  But
one can also restrict the base space, cutting back the poser's freedom
--- even by a property of the as-yet-unknown optimal solution.  Call
this a {\em promise problem}: the solver is penalized only on givens
where the poser has kept the required promise, and carries no burden of
recognizing improperly-posed instances.

\begin{definition}\label{def:offcenter}
Let $A\subseteq B$ be compact convex bodies.  Say $A$ sits {\em
offcenter} in $B$ if
\[
(n+1)\,\Cen(B)\;-\;n\,\Cen(A)\;\in\;B.
\]
\end{definition}

When $\Cen(A)=\Cen(B)$ the distinguished point is the common centroid,
so exactly centered containment is offcenter.  In general the condition
bounds how far the centroid of $B$ may drift from that of $A$, in units
calibrated by the cone case: when $B$ is a cone whose apex hyperplane
touches $A$, the distinguished point sits at the apex.

\begin{theorem}\label{thm:promise}
Let $A\subseteq B$ be compact convex bodies with $A$ offcenter in $B$.
Then
\[
B-\Cen(B)\;\supseteq\;\frac{n}{n+1}\bigl(A-\Cen(A)\bigr).
\]
Consequently, if the poser promises that the optimal intersection $I$
sits offcenter in each $C_i$, then
\[
\LV(\mathcal{C})\;\ge\;\left(\frac{n}{n+1}\right)^{\!n}\MV(\mathcal{C})
\;>\;\frac{1}{e}\,\MV(\mathcal{C}),
\]
uniformly in the dimension.
\end{theorem}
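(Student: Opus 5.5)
The inclusion should follow from a single convex-combination identity, with no appeal to Lemma~\ref{lem:mr} or Theorem~\ref{thm:core}. Both the offcenter condition and the desired inclusion are invariant under a common translation of $A$ and $B$, so I first translate so that $\Cen(A)=\mathbf 0$, and write $b:=\Cen(B)$. The promise then reads $(n+1)b\in B$. The claim becomes $b+\frac{n}{n+1}a\in B$ for every $a\in A$. I will write
\[
b+\frac{n}{n+1}\,a\;=\;\frac{1}{n+1}\bigl((n+1)b\bigr)+\frac{n}{n+1}\,a .
\]
This is a convex combination of the point $(n+1)b\in B$ and the point $a\in A\subseteq B$, so it lies in $B$ by convexity. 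Hence $B-\Cen(B)\supseteq\frac{n}{n+1}A=\frac{n}{n+1}\bigl(A-\Cen(A)\bigr)$. This also explains the cone calibration remarked after Definition~\ref{def:offcenter}: the distinguished point $(n+1)\Cen(B)-n\Cen(A)$ is exactly what makes this combination available.

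For the consequence, I follow the template of the proof of Theorem~\ref{thm:main}. By Lemma~\ref{lem:attain}, translate the family so that the identity alignment is optimal, with $I:=\bigcap_i C_i$ and $\Vol(I)=\MV(\mathcal C)$. I take the promise to refer to this optimal $I$ and its position inside each $C_i$; it is unaffected by a further common translation. Then translate so that $\Cen(I)=\mathbf 0$ and put $c_i:=\Cen(C_i)$. Applying the first part with $A=I$ and $B=C_i$ for each $i$ gives $C_i-c_i\supseteq\frac{n}{n+1}I$. Intersecting over $i$,
\[
L_0:=\bigcap_i (C_i-c_i)\supseteq \frac{n}{n+1}I .
\]
Since the lazy intersection is a translate of $L_0$, this yields $\LV(\mathcal C)\ge\left(\frac{n}{n+1}\right)^n\Vol(I)=\left(\frac{n}{n+1}\right)^n\MV(\mathcal C)$.

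The last step is the numerical inequality $\left(\frac{n}{n+1}\right)^n=(1+1/n)^{-n}>e^{-1}$. It follows from $n\log(1+1/n)<1$, which holds because $\log(1+x)<x$ for $x>0$. The bound is uniform in $n$ and in fact decreases to $1/e$. I expect no real obstacle here. The only point needing care is bookkeeping: the promise concerns an optimal intersection $I$ (any maximizer will do), and the translations used to normalize must preserve both the offcenter condition and the relation between $L_0$ and the lazy intersection. Both are translation-equivariant, so this is routine.
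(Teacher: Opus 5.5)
Your proposal is correct and matches the paper's proof: after normalizing $\Cen(A)=\mathbf 0$, the containment is the convex combination $\frac{1}{n+1}\bigl((n+1)\Cen(B)\bigr)+\frac{n}{n+1}a\in B$. The volume bound then comes from applying this to $I\subseteq C_i$ at an optimal alignment given by Lemma~\ref{lem:attain}. Your derivation of $(1+1/n)^{-n}>e^{-1}$ from $\log(1+x)<x$, together with $\MV(\mathcal{C})>0$ for the strict final inequality, spells out what the paper states more briefly.
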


\begin{proof}
Translate so $\Cen(A)=\mathbf 0$ and write $q:=(n+1)\Cen(B)\in B$.  For
any $a\in A\subseteq B$,
\[
\frac{n}{n+1}\,a+\Cen(B)
\;=\;\frac{n}{n+1}\,a+\frac{1}{n+1}\,q\;\in\;B
\]
by convexity, which is the displayed containment.  For the consequence,
take the bodies optimally aligned (Lemma~\ref{lem:attain}), so
$I=\bigcap_iC_i$ with $\Vol(I)=\MV$; the containment applied to
$I\subseteq C_i$ for each $i$ shows the lazy intersection contains a
translate of $\frac{n}{n+1}(I-\Cen(I))$, whence
$\LV\ge\left(\frac{n}{n+1}\right)^n\Vol(I)$.  Finally
$\left(\frac{n}{n+1}\right)^n=\left(1+\frac1n\right)^{-n}$ decreases to
$e^{-1}$, so the inequality is strict in every dimension.
\end{proof}

\begin{proposition}\label{prop:collinearfails}
The following condition does not imply that $B-\Cen(B)$ contains
$\tfrac{n}{n+1}\bigl(A-\Cen(A)\bigr)$: that $\Cen(A)$ project, on the line
through the two centroids, to a point no closer to the midpoint of $B$'s
projection than $\Cen(B)$ does.
\end{proposition}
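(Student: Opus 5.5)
It suffices to give a single counterexample, and I would look for it in the plane ($n=2$, so the target factor is $\tfrac{2}{3}$). The plan is to exploit the Minkowski--Radon extremal case of Lemma~\ref{lem:mr}: take $B$ to be a cone, so that its centroid already sits as far from one supporting line as Lemma~\ref{lem:mr} allows. Then take $A\subseteq B$ to be a thin body whose centroid sits low but which still reaches the far supporting line. That makes $A$ stick out of the scaled copy of $B$ in the direction of the cone's axis.

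\textbf{The construction.} Let $B$ be the isosceles triangle with apex $(0,0)$ and base the segment from $(-1,1)$ to $(1,1)$. By Lemma~\ref{lem:conebody}, or by direct computation, $\Cen(B)=(0,\tfrac23)$. Put $u=(0,1)$; then $h_{B-\Cen(B)}(u)=\tfrac13$. For small $\eta>0$, let $A$ be the thin triangle with vertex $(0,1)$ and base the segment from $(-\eta,\eta)$ to $(\eta,\eta)$. This $A$ lies in $B$ because the base points satisfy $|x|\le y$. Its centroid is $(0,\tfrac{1+2\eta}{3})$. Hence
\[
h_{\frac23(A-\Cen(A))}(u)=\tfrac23\Bigl(1-\tfrac{1+2\eta}{3}\Bigr)=\tfrac{4(1-\eta)}{9},
\]
which exceeds $\tfrac13$ whenever $\eta<\tfrac14$. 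So the point $\tfrac23\bigl((0,1)-\Cen(A)\bigr)$ lies in $\tfrac23(A-\Cen(A))$ but not in $B-\Cen(B)$, and the containment fails.

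\textbf{Checking the hypothesis.} Both centroids lie on the vertical axis, so the line through them is that axis. The projection of $B$ onto it is $[0,1]$, with midpoint $\tfrac12$. The centroid $\Cen(B)$ is at distance $\tfrac16$ from the midpoint, and $\Cen(A)$ is at distance $\tfrac12-\tfrac{1+2\eta}{3}=\tfrac{1-4\eta}{6}$. That is strictly less than $\tfrac16$: $\Cen(A)$ is \emph{closer}, which is the wrong way.

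This is the step I expect to need adjustment. The fix is to lower $A$'s centroid further while keeping its top at height $1$. I would replace the thin triangle by a thin body concentrating mass near the apex, for example a thin kite whose wide part lies at height $\tfrac13-\epsilon$. Alternatively, I would bend $A$ so that its mass sits near the apex. In either case $\Cen(A)$ drops below height $\tfrac13$, so its distance to $\tfrac12$ becomes at least $\tfrac16$ and the hypothesis holds. Meanwhile $h_{A-\Cen(A)}(u)>\tfrac23$, so $\tfrac23h_{A-\Cen(A)}(u)>\tfrac49>\tfrac13$ and the failure persists with margin. For the margin it is enough that the top of $A$ stays at height $1$ while $\Cen(A)\cdot u\le\tfrac13$.

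\textbf{Concrete choice and the genuine obstacle.} The one real constraint is staying inside $B$ near the apex, where $B$ is narrow. I would take $A=\Hull\{(0,1),(\pm s,s)\}\cup$ nothing more, that is, the triangle with vertices $(0,1)$, $(-s,s)$, $(s,s)$, for $s\le 1$. Its centroid height is $\tfrac{1+2s}{3}$, and this is at most $\tfrac13$ only when $s\le0$, so a triangle alone cannot do it. I would therefore use instead the quadrilateral $A_s=\Hull\{(0,0),(-s,s),(s,s),(0,1)\}$ with $s$ near $\tfrac13$. Its centroid can be computed by splitting $A_s$ into two triangles and lies near height $\tfrac{1+s}{3}$ after weighting. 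I would then tune $s$ so that the centroid height is $\le\tfrac13$ or $\ge\tfrac23$. If no $s$ works, I would fall back to taking $B$ itself less cone-like, so that its centroid sits closer to the midpoint, and redo the same comparison in direction $u$.
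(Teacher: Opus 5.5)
Your proposal never reaches a valid counterexample, and the repair you suggest is impossible. Take $B$ as your triangle with apex $(0,0)$ and base at height $1$. Let $A\subseteq B$ be any convex body whose top is at height $1$ and whose bottom is at height $t_0\ge 0$. Lemma~\ref{lem:mr}, applied to $A-\Cen(A)$ in direction $-u$, gives $\Cen(A)\cdot u\ge t_0+\tfrac{1-t_0}{3}\ge\tfrac13$. Equality is excluded, because it would make $A$ a cone whose base lies in $B\cap\{y=0\}=\{(0,0)\}$, i.e.\ a segment. So no kite puts $\Cen(A)$ at height $\le\tfrac13$, and ``bending'' $A$ would destroy convexity. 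Your quadrilateral $A_s$ has centroid height exactly $\tfrac{1+s}{3}$, which lies strictly between $\tfrac13$ and $\tfrac23$ for $0<s<1$; at $s=1$ you get $A=B$, where containment holds trivially. The other branch, $\Cen(A)\cdot u\ge\tfrac23$, gives $\tfrac23 h_{A-\Cen(A)}(u)\le\tfrac29<\tfrac13$, so nothing fails in direction $u$. The fallback of making $B$ less cone-like is never carried out. As written, the proposal therefore establishes nothing.

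The missing idea is this. Your starting configuration is exactly the degenerate equality case: as $\eta\to0$, both distances tend to $\tfrac16$. The way out is to create slack between the top of $A$ and the top of $B$, not to rearrange mass inside $A$.

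Your very first thin triangle already works once its apex is lowered to $(0,t_1)$ with $\tfrac34+\eta<t_1<1-2\eta$. Then $\Cen(A)\cdot u=\tfrac{t_1+2\eta}{3}<\tfrac13$, so the hypothesis holds strictly. At the same time $\tfrac23 h_{A-\Cen(A)}(u)=\tfrac{4(t_1-\eta)}{9}>\tfrac13=h_{B-\Cen(B)}(u)$, so containment fails. For instance, take $\eta=0.01$ and $t_1=0.95$.

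The paper perturbs the same degenerate configuration the other way. It keeps the needle $A=\Hull\{(2,0),(-1,\pm\delta)\}$ spanning the triangle from apex to base, and enlarges $B$ with a cap vertex $(2.3,0)$ beyond $A$'s tip. This moves the midpoint of $B$'s projection to $0.65$ but moves $\Cen(B)$ only to $1.1$, so the distances are $0.65>0.45$. Containment still fails, since the maximal $x$-coordinate of $B-\Cen(B)$ is $1.2$, below the vertex $4/3$ of $\tfrac23 A$.
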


\begin{proof}
In the plane take $A=\Hull\{(2,0),(-1,\delta),(-1,-\delta)\}$, with centroid
at the origin, and $B=\Hull\bigl(A\cup\{(2,h),(2,-h),(2.3,0)\}\bigr)$ with
$h=1$.  In the limit $\delta\to 0$ one computes $\Cen(B)=(1.1,0)$, while $B$
projects on the $x$-axis to $[-1,2.3]$ with midpoint $0.65$; the origin lies
at distance $0.65$ from that midpoint and $\Cen(B)$ at distance $0.45$, so
the condition holds strictly.  But $B-\Cen(B)$ has maximal $x$-coordinate
$1.2<4/3$, while $\frac23A$ has the vertex $(4/3,0)$, so the containment
fails.  All inequalities are strict, hence persist for small $\delta>0$.
\end{proof}

\noindent
The far-point-hull computation establishes only the reverse containment ---
the core lies {\em inside} the dilate --- which yields sharpness claims but
not the volume guarantee; the promise of Definition~\ref{def:offcenter} is
what the convexity argument uses.

\begin{remark}[Sharpness, partially]
Under Definition~\ref{def:offcenter} the constant $\frac{n}{n+1}$
cannot be replaced by $1$: let $A=\Delta=\mathrm{conv}\{v_0,\ldots,
v_n\}$ be a simplex with centroid $\mathbf 0$ and, for $t\ge 0$, let
$B_t=\mathrm{conv}\{v_1,\ldots,v_n,(1+t)v_0\}$.  Since the centroid of a
simplex is its vertex average, $(n+1)\Cen(B_t)-n\Cen(A)=tv_0$, which
lies on the segment from $\mathbf 0=\Cen(A)\in B_t$ to $(1+t)v_0$, so
$A$ sits offcenter in $B_t$ {\em exactly}, for every $t$.  As
$t\to\infty$ the recentered bodies $B_t-\Cen(B_t)$ converge, locally,
to a cylinder whose cross-section through the origin is the
$\frac{n}{n+1}$-dilate of the relevant projection of $\Delta$; the
$n+1$ vertex directions thus pin the transverse profile of the
offcenter core at exactly the factor $\frac{n}{n+1}$.  Whether the
offcenter core equals the full dilate
$\frac{n}{n+1}(A-\Cen(A))+\Cen(A)$ for every $A$ we leave open.
\end{remark}

\begin{remark}
The constant $\left(\frac{n}{n+1}\right)^n$ is precisely the constant in
Gr\"unbaum's theorem that every halfspace containing the centroid of a
convex body contains at least $\left(\frac{n}{n+1}\right)^n$ of its
volume \cite{grunbaumhalf}; the common source is the extremality of
cones for centroid inequalities (Lemma~\ref{lem:mr}).  At $n=2$ the
value $4/9$ also appears in Winternitz's theorem, and coincides with the sharp constant
$\left(\frac{2}{n+1}\right)^n$ of Theorem~\ref{thm:main} at $n=2$.  We do
not know whether the coincidence is meaningful.
\end{remark}

\section{Questions}\label{sec:questions}

\begin{question}\label{q:twobody}
Is $c_{n,2}=\left(\frac{2}{n+1}\right)^n$ for $n\ge 3$?  Equivalently:
do two bodies suffice to approach the worst case in every dimension, as
they do in the plane?  The naive two-body version of the construction fails:
for two opposite cones in the plane, centroid alignment is exactly optimal.
\end{question}

\begin{question}\label{q:selector}
Call a map $p$ assigning to each convex body a point $p(K)$, covariantly
under rigid motions, a {\em selector}; a selector induces the alignment
strategy $v_i=-p(C_i)$.  Theorem~\ref{thm:main} computes the worst-case
efficiency of the centroid selector.  The family of
Proposition~\ref{prop:construction} is specifically fatal to the centroid:
since $\{\Gamma(u,\rho)\}_{u\in\Sph}$ is invariant under all rotations and
reflections, the Chebyshev-center selector (center of the largest inscribed
ball) achieves efficiency $1$ on it.  Which selector maximizes
worst-case efficiency, and is any selector's worst case better than
$\left(\frac{2}{n+1}\right)^n$?  The Steiner point, as the unique
continuous, Minkowski-additive, rigid-motion-covariant selector, is a
natural test case.
\end{question}

\begin{question}
Quantitative stability: must a family whose lazy efficiency approaches
$\left(\frac{2}{n+1}\right)^n$ resemble the construction of
Proposition~\ref{prop:construction} --- optimal intersection nearly
symmetric, bodies nearly cones over it in the active directions?  The proof funnels through the
Brunn--Minkowski inequality, for which sharp stability estimates are
available.
\end{question}

\begin{question}
Functional version: for log-concave $f_1,\ldots,f_m:\R^n\to[0,\infty)$,
choose shifts $v_i$ maximizing $\int\prod_i f_i(x-v_i)\,dx$.  Barycenter
alignment is the lazy strategy; Gr\"unbaum-type centroid inequalities
for log-concave measures supply the Minkowski--Radon substitute.  What
is the sharp analogue of $\left(\frac{2}{n+1}\right)^n$?
\end{question}

\section*{Acknowledgments}

The Lean~4 formalization was carried out with Aristotle, the formal reasoning
system of Harmonic; the statements to be proved, the decomposition into lemmas,
and the verification of the result against the argument of this paper are the
author's.  Two points in Section~\ref{sec:core} were revised in consequence of
the formalization: the identification of the extremal bodies in
Lemma~\ref{lem:mr} now proceeds directly, and the appeal there to the equality
case of Brunn--Minkowski proved unnecessary.

\section*{Data availability}

The formal development is available at
\begin{center}
\texttt{https://github.com/DavidVFeldman/arranging-convex-bodies},\\
commit \texttt{f7b4a98},
archived at \texttt{doi:10.5281/zenodo.21802384}.
\end{center}
\noindent
It builds against \texttt{mathlib4} revision \texttt{v4.28.0}, pinned by the
included \texttt{lake-manifest.json}.  No other data are associated with this
article.

\appendix

\section{Formal verification}\label{app:lean}

All results above have been checked in Lean~4.  The one classical input quoted
rather than proved is the equality case of the Brunn--Minkowski inequality,
which enters only for $n\ge2$; \S\ref{app:conditional} states precisely where.
The development is written against \texttt{mathlib4} (revision
\texttt{v4.28.0}) and is available at

\begin{center}
\texttt{https://github.com/DavidVFeldman/arranging-convex-bodies},
commit \texttt{f7b4a98}.
\end{center}

\noindent
Running \texttt{lake build} compiles the library.  The build contains no
\texttt{sorry} and introduces no \texttt{axiom}.  Continuous integration
rebuilds the development from a clean clone on every push and fails if any
audited declaration depends on \texttt{sorryAx}.  Each audited declaration is
checked with \texttt{\#print axioms}; all report
\[
  \texttt{[propext, Classical.choice, Quot.sound]},
\]
the three axioms of Lean's standard classical foundation.  No use is made of
\texttt{native\_decide}, of compiler reflection, or of any unverified oracle.

\subsection{Correspondence}\label{app:table}

\begin{center}
\begin{tabular}{lll}
\hline
Paper & Lean name & Module\\
\hline
Theorem~\ref{thm:main} & \texttt{lazy\_efficiency\_strict} & \texttt{Tier3Final}\\
Theorem~\ref{thm:main} (sharpness) & \texttt{lazy\_efficiency\_sharp} & \texttt{CentroidAlignment}\\
Corollary~\ref{cor:twobody} & \texttt{twobody\_strict} & \texttt{BMEquality}\\
Theorem~\ref{thm:core} & \texttt{centralCore\_eq} & \texttt{CentroidAlignment}\\
Corollary~\ref{cor:corevolume} (identity) & \texttt{centralCore\_volume\_eq\_ofReal} & \texttt{CoreBounds}\\
Corollary~\ref{cor:corevolume} (left) & \texttt{centralCore\_volume\_lower} & \texttt{CentroidAlignment}\\
Corollary~\ref{cor:corevolume} (right) & \texttt{centralCore\_volume\_upper} & \texttt{CoreBounds}\\
Lemma~\ref{lem:mr} & \texttt{mr\_equality\_case} & \texttt{Tier3}\\
Lemma~\ref{lem:attain} & \texttt{maxVol\_pos\_attained} & \texttt{CentroidAlignment}\\
Lemma~\ref{lem:conedirections} & \texttt{cone\_directions\_finite} & \texttt{Tier3}\\
Proposition~\ref{prop:construction} & \texttt{construction} & \texttt{CentroidAlignment}\\
Lemma~\ref{lem:conebody} & \texttt{cone\_isBody}, \texttt{cone\_centroid} & \texttt{CentroidAlignment}\\
Lemma~\ref{lem:conebounds} & \texttt{cone\_subset\_halfspace}, & \texttt{CentroidAlignment}\\
 & \texttt{ball\_subset\_cone} & \\
Theorem~\ref{thm:promise} & \texttt{promise\_volume} & \texttt{CentroidAlignment}\\
Proposition~\ref{prop:dimone} & \texttt{lazy\_optimal\_dim\_one} & \texttt{DimOne}\\
\hline
\end{tabular}
\end{center}

\subsection{The one classical input}\label{app:conditional}

The equality case of the Brunn--Minkowski inequality --- if
$\Vol(A+B)^{1/n}=\Vol(A)^{1/n}+\Vol(B)^{1/n}$ for convex bodies $A,B$ then $A$
and $B$ are homothetic \cite[Thm.~7.1.1]{schneider} --- is not present in
\texttt{mathlib4} and is not proved here.  It appears in the development as a
named hypothesis,
\[
  \texttt{BMEqualityCase } n ,
\]
discharged for $n=0$ and $n=1$ (\texttt{bmEqualityCase\_zero},
\texttt{bmEqualityCase\_one}) and assumed otherwise.  It is the
specialization of \cite[Thm.~7.1.1]{schneider} to bodies with non-empty
interior; note that ``convex body'' there does not presuppose interior
points.  Under that hypothesis the two alternatives of the equality case
in \cite{schneider} --- that the bodies lie in parallel hyperplanes, or
that one of them is a single point --- are excluded, and the conclusion is
positive homothety.  The difference-body form
used in \S\ref{sec:main} is derived from it
(\texttt{bmDiffEqualityCase\_of\_bmEqualityCase}), so a single instance suffices
in each dimension.  Consequently:

\begin{itemize}
\item $n\le 1$: unconditional.  In particular Proposition~\ref{prop:dimone} and
  the equality case of Lemma~\ref{lem:mr} on the line carry no hypothesis.
\item $n=2$: Theorem~\ref{thm:main} and Corollary~\ref{cor:twobody} are
  conditional on \texttt{BMEqualityCase 2} alone.
\item $n\ge 3$: conditional on \texttt{BMEqualityCase} $n$ alone.
\end{itemize}

Lemma~\ref{lem:mr}, including its equality case, is unconditional in every
dimension (\texttt{mr\_equality\_case}).

Two items in the paper are not part of the formal development.
Proposition~\ref{prop:collinearfails} is a hand computation with explicit
planar coordinates, verifiable by inspection but not formalised.  The equality
characterisations in Corollary~\ref{cor:corevolume} --- central symmetry on the
left, simplices on the right --- are quoted from
\cite{gardner,schneider,rogersshephard}; of these only the left-hand direction
for centrally symmetric $K$ is formalised
(\texttt{centralCore\_volume\_eq\_of\_symmetric}), together with its converse
under \texttt{BMDiffEqualityCase} $n$
(\texttt{symmetric\_of\_centralCore\_volume\_eq}).  The upper bound of
Corollary~\ref{cor:corevolume} is conditional on the Rogers--Shephard inequality
with its equality case, carried as the hypothesis \texttt{RogersShephard} $n$
and discharged only for $n=1$ (\texttt{rogersShephard\_one}); the simplex
characterisation is quoted from \cite{rogersshephard} and is not machine-checked.

\subsection{Two lemmas supplied for the verification}\label{app:new}

Two facts used without comment in the classical literature are not available in
\texttt{mathlib4} and were proved for this development.  Both concern the
section function $A(t)=\Vol_{n-1}\bigl(K\cap\{x\cdot u=t\}\bigr)$ of a convex
body on its closed support interval $[t_0,t_1]$.

\begin{enumerate}
\item \texttt{sectionRadius\_concaveOn\_Icc}: the section radius
  $A^{1/(n-1)}$ is concave on the \emph{closed} interval.  Concavity on the
  interior is Brunn's theorem; the endpoints are what force $A(t_0)=0$ in the
  equality analysis of Lemma~\ref{lem:mr}.
\item \texttt{sectionRadius\_continuousOn\_Icc}: the section radius is
  continuous on the closed interval.  Continuity at an extreme face is the step
  that upgrades the almost-everywhere conclusion of the equality analysis to an
  identity.  The proof combines upper semicontinuity (a compact section lies
  inside every thickening of the face, and the volumes of those thickenings
  descend to the volume of the face) with the lower bound supplied by
  concavity.
\end{enumerate}


\begin{thebibliography}{99}

\bibitem{ahnbrassshin}
H.-K.\ Ahn, P.\ Brass, C.-S.\ Shin,
Maximum overlap and minimum convex hull of two convex polyhedra under
translations,
{\em Comput.\ Geom.}\ {\bf 40} (2008), 171--177.

\bibitem{ahncr}
H.-K.\ Ahn, S.-W.\ Cheng, I.\ Reinbacher,
Maximum overlap of convex polytopes under translation,
{\em Comput.\ Geom.}\ {\bf 46} (2013), 552--565.

\bibitem{ahnrigid}
H.-K.\ Ahn, O.\ Cheong, C.-D.\ Park, C.-S.\ Shin, A.\ Vigneron,
Maximizing the overlap of two planar convex sets under rigid motions,
{\em Comput.\ Geom.}\ {\bf 37} (2007), 3--15.

\bibitem{bonnesenfenchel}
T.\ Bonnesen, W.\ Fenchel,
{\em Theorie der konvexen K\"orper},
Springer, Berlin, 1934.

\bibitem{chanhair}
T.\ M.\ Chan, I.\ M.\ Hair,
A linear time algorithm for the maximum overlap of two convex polygons
under translation,
{\em Proc.\ 41st Internat.\ Sympos.\ Comput.\ Geom.\ (SoCG 2025)},
LIPIcs {\bf 332} (2025), 31:1--31:16.

\bibitem{deberg}
M.\ de Berg, O.\ Devillers, M.\ van Kreveld, O.\ Schwarzkopf,
M.\ Teillaud,
Computing the maximum overlap of two convex polygons under translations,
{\em Theory Comput.\ Syst.}\ {\bf 31} (1998), 613--628.

\bibitem{faryredei}
I.\ F\'ary, L.\ R\'edei,
Der zentralsymmetrische Kern und die zentralsymmetrische H\"ulle von
konvexen K\"orpern,
{\em Math.\ Ann.}\ {\bf 122} (1950), 205--220.

\bibitem{gardner}
R.\ J.\ Gardner,
{\em Geometric Tomography}, second edition,
Cambridge University Press, 2006.

\bibitem{grunbaumhalf}
B.\ Gr\"unbaum,
Partitions of mass-distributions and of convex bodies by hyperplanes,
{\em Pacific J.\ Math.}\ {\bf 10} (1960), 1257--1261.

\bibitem{grunbaumsurvey}
B.\ Gr\"unbaum,
Measures of symmetry for convex sets,
in {\em Convexity}, Proc.\ Sympos.\ Pure Math.\ VII,
Amer.\ Math.\ Soc., 1963, 233--270.

\bibitem{harpeledroy}
S.\ Har-Peled, S.\ Roy,
Approximating the maximum overlap of polygons under translation,
{\em Algorithmica} {\bf 78} (2017), 147--165.

\bibitem{rogersshephard}
C.\ A.\ Rogers, G.\ C.\ Shephard,
The difference body of a convex body,
{\em Arch.\ Math.}\ {\bf 8} (1957), 220--233.

\bibitem{schneider}
R.\ Schneider,
{\em Convex Bodies: The Brunn--Minkowski Theory},
second expanded edition, Cambridge University Press, 2014.

\end{thebibliography}
\end{document}